\providecommand{\U}[1]{\protect\rule{.1in}{.1in}}
\newtheorem{theorem}{Theorem}
\theoremstyle{plain}
\newtheorem{corollary}{Corollary}
\newtheorem{definition}{Definition}
\newtheorem{lemma}{Lemma}
\newtheorem{proposition}{Proposition}
\numberwithin{equation}{section}
\begin{document}
\title[An Inverse Function Theorem ]{An Inverse Function Theorem in Fr\'{e}chet spaces}
\author{Ivar Ekeland}
\address{Canada Research Chair in Mathematical Economics\\
University of British Columbia}
\email{ekeland@math.ubc.ca}
\urladdr{http://www.pims.math.ca/\symbol{126}ekeland}
\thanks{The author thanks Eric S\'{e}r\'{e}, Louis Nirenberg, Massimiliano Berti and
Philippe Bolle, who were the first to see this proof. Particular thanks go to
Philippe Bolle for spotting a significant mistake in an earlier version, and
to Eric S\'{e}r\'{e}, whose careful reading led to several improvements and
clarifications. Their contribution and friendship is gratefully acknowledged.}
\date{October 14, 2010; to appear, Annales de l'Institut Henri Poincar\'{e}, Analyse Non Lin\'{e}aire}
\keywords{Inverse function theorem, implicit function theorem,
Fr\'{e}chet space, Nash-Moser theorem}

\begin{abstract}
I present an inverse function theorem for differentiable maps between
Fr\'{e}chet spaces which contains the classical theorem of Nash and Moser as a
particular case. In contrast to the latter, the proof does not rely on the
Newton iteration procedure, but on Lebesgue's dominated convergence theorem
and Ekeland's variational principle. As a consequence, the assumptions are
substantially weakened: the map $F$ to be inverted is not required to be
$C^{2}$, or even $C^{1}$,\ or even Fr\'{e}chet-differentiable.

\end{abstract}
\maketitle

\section{Introduction}

Recall that a Fr\'{e}chet space $X$ is \emph{graded} if its topology is
defined by an increasing sequence of norms $\left\Vert {}\right\Vert _{k}$,
$k\geq0$:%
\[
\forall x\in X,\ \ \ \left\Vert x\right\Vert _{k}\leq\left\Vert x\right\Vert
_{k+1}%
\]

Denote by $X_{k}$ the completion of $X$ for the norm $\left\Vert {}\right\Vert
_{k}$. It is a Banach space, and we have the following scheme:
\[
\longrightarrow X_{k+1}\longrightarrow_{i_{k}}X_{k}\longrightarrow_{i_{k-1}%
}X_{k-1}\longrightarrow...\longrightarrow X_{0}%
\]
where each identity map $i_{k}$ is injective and continuous, in fact
$\left\Vert i_{k}\right\Vert \leq1$. By definition, $X$ is a dense subspace of
$X_{k}$, we have $X=\cap_{k=0}^{\infty}X_{k}$, and $x^j\longrightarrow\bar{x}$
in $X$ if and only if $\left\Vert x^j-\bar{x}\right\Vert _{k}\longrightarrow0$
for every $k\geq0$.

Our main example will be $X=C^{\infty}\left(  \bar{\Omega},\mathbb{R}^{d}\right)  $, where
$ \bar{\Omega}\subset \mathbb{R}^{n}$ is compact, is the closure of its interior $\Omega$, and has
smooth boundary. It is well known that the topology of $C^{\infty}\left(
 \bar{\Omega},\mathbb{R}^{d}\right)  $ can be defined in two equivalent ways. On the one hand,
we can write $C^{\infty}\left(   \bar{\Omega},\mathbb{R}^{d}\right)  =\cap C^{k}\left(
 \bar{\Omega},\mathbb{R}^{d}\right)  $, where $C^{k}\left(   \bar{\Omega},\mathbb{R}^{d}\right)  $ is the
Banach space of all functions continuously differentiable up to order $k$,
endowed with the sup norm:%
\begin{equation}
\left\Vert x\right\Vert _{k}:=\max_{p_{1}+...+p_{n}\leq p}\max_{\omega
\in\bar{\Omega}}\left\vert \frac{\partial^{p_{1}+...+p_{n}}x}{\partial^{p_{1}}%
\omega_{1}...\partial^{p_{n}}\omega_{n}}\left(  \omega\right)  \right\vert
\label{08}%
\end{equation}

On the other, we can also write $C^{\infty}\left(  \bar{\Omega},\mathbb{R}^{d}\right)  =\cap
H^{k}\left(   \Omega,\mathbb{R}^{d}\right)  $, where $H^{k}\left(  \Omega,\mathbb{R}^{d}\right)$
is the Sobolev space consisting of all functions with square-integrable
derivatives, up to order $k$, endowed with the Hilbert space structure:%
\begin{equation}
\left\Vert x\right\Vert _{k}^{2}:=\sum_{p_{1}+...+p_{n}\leq p}\int_{\Omega
}\left\vert \frac{\partial^{p_{1}+...+p_{n}}x}{\partial^{p_{1}}\omega
_{1}...\partial^{p_{n}}\omega_{n}}\right\vert ^{2}d\omega\label{09}%
\end{equation}

We will prove an inverse function theorem between graded Fr\'{e}chet spaces.
Let us give a simple version in $C^{\infty}$; in the following statement,
either definition of the $k$-norms, (\ref{08}) or (\ref{09}), may be used:

\begin{theorem}
\label{thm0}Let $F$ be a map from a graded Fr\'{e}chet space $X=\cap_{k\geq
0}X_{k}$ into $C^{\infty}$. Assume that there are integers $d_{1}$ and $d_{2}%
$, and sequences $m_{k}>0,\ m_{k}^{\prime}>0,\ k\in\mathbb{N}$, such that, for
all $x$ is some neighbourhood of $0$ in $X$, we have:

\begin{enumerate}
\item $F\left(  0\right)  =0$

\item $F$ is continuous and G\^{a}teaux-differentiable, with derivative
$DF\left(  x\right)  $

\item For every $u\in X$, we have
\begin{equation}
\forall k\geq0,\ \ \left\Vert DF\left(  x\right)  u\right\Vert _{k}\leq
m_{k}\left\Vert u\right\Vert _{k+d_{1}} \label{01}%
\end{equation}

\item $DF\left(  x\right)  $ has a right-inverse $L\left(  x\right)  $:%
\[
\forall v\in C^{\infty},\ \ \ DF\left(  x\right)  L\left(  x\right)  v=v
\]

\item For every $v\in C^{\infty}$, we have:%
\begin{equation}
\forall k\geq0,\ \ \ \left\Vert L\left(  x\right)  v\right\Vert _{k}\leq
m_{k}^{\prime}\left\Vert v\right\Vert _{k+d_{2}}\label{03}%
\end{equation}

\end{enumerate}

Then for every $y\in C^{\infty}$ such that
\[
\left\Vert y\right\Vert _{k_{0}+d_{2}}<\frac{R}{m_{k_{0}}^{\prime}}%
\]
and every $m>m_{k_{0}}^{\prime}$ there is some $x\in X$ such that:%
\begin{align*}
\left\Vert x\right\Vert _{k_{0}}  &  <R\\
\left\Vert x\right\Vert _{k_{0}}  &  \leq m\left\Vert y\right\Vert
_{k_{0}+d_{2}}%
\end{align*}

and:%
\[
F\left(  x\right)  =y
\]

\end{theorem}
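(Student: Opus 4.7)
The plan is to avoid Newton iteration and instead produce $x$ as a (near-)minimizer of the residual
\[ \phi(x) := \|F(x) - y\|_{k_0+d_2}, \]
obtained via Ekeland's variational principle, and to deduce $\phi(\bar x) = 0$ by playing the approximate-minimality of $\bar x$ against the descent direction supplied by the right-inverse $L(\bar x)$.

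The most delicate step is choosing the complete metric space $(M, d)$ containing $0$ on which to run the principle. Since $X$ is not complete in any single norm $\|\cdot\|_k$, one cannot take a closed ball in $X_{k_0}$ directly. I would take $M := \{x \in X : \|x\|_{k_1} \leq r\}$ for some auxiliary level $k_1 > k_0+d_2$ and radius $r > 0$, endowed with the weaker metric $d(x,x') := \|x-x'\|_{k_0}$. Completeness of $(M,d)$ should follow from a compactness-type property of the scale: a $d$-Cauchy sequence with uniformly bounded $\|\cdot\|_{k_1}$-norm must converge to a point of $X$, which in the concrete case $X = C^\infty(\bar\Omega)$ is Ascoli's theorem or Rellich's embedding. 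Hypotheses (1)--(3) and the tame estimate (\ref{01}) then make $\phi$ continuous on $(M,d)$ and bounded below by $0$.

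Applying Ekeland's principle to $\phi$ on $(M,d)$ starting from $x_0=0$, with $\delta := \phi(0) = \|y\|_{k_0+d_2}$ and a parameter $\lambda > 0$, yields $\bar x \in M$ with $\phi(\bar x) \leq \delta$, $\|\bar x\|_{k_0} \leq \lambda$, and
\[ \phi(x) \geq \phi(\bar x) - \frac{\delta}{\lambda}\,\|x-\bar x\|_{k_0} \qquad \text{for every } x \in M. \]
Assume by contradiction that $\phi(\bar x) > 0$. Set $v := y - F(\bar x)$ and $u := L(\bar x)v$; then $DF(\bar x)u = v$ and (\ref{03}) gives $\|u\|_{k_0} \leq m'_{k_0}\,\phi(\bar x)$. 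Gâteaux-differentiability of $F$ at $\bar x$ yields
\[ \phi(\bar x + tu) = (1-t)\,\phi(\bar x) + o(t) \quad \text{as } t \to 0^+. \]
Substituting $x = \bar x + tu$ into Ekeland's inequality, dividing by $t$, and letting $t \to 0^+$, one obtains
\[ \phi(\bar x)\,\Bigl(1 - \frac{m'_{k_0}\,\delta}{\lambda}\Bigr) \leq 0. \]
Since $\delta < R/m'_{k_0}$ and $m > m'_{k_0}$, the interval $(m'_{k_0}\delta,\,\min(R,\,m\delta))$ is nonempty; choosing $\lambda$ in it makes the bracket strictly positive, forcing $\phi(\bar x) = 0$. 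The Ekeland bound $\|\bar x\|_{k_0} \leq \lambda$ delivers both asserted inequalities $\|\bar x\|_{k_0} < R$ and $\|\bar x\|_{k_0} \leq m\|y\|_{k_0+d_2}$.

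The main obstacle I anticipate is ensuring that the trial point $\bar x + tu$ actually lies in $M$ for small $t > 0$, so that Ekeland's inequality may legitimately be tested against it. Membership in $M$ requires a bound on $\|\bar x + tu\|_{k_1}$, not merely on $\|u\|_{k_0}$; this forces the use of (\ref{03}) at level $k_1$ and a careful coupling between $k_1$, $r$, and $\lambda$. In particular, $\bar x$ must be shown to lie strictly inside $M$, e.g.\ by taking $r$ generously with respect to what the Ekeland estimate requires, or by replacing $u$ by a smoothed variant so that the descent step stays inside the ball. The balancing act between the level $k_0+d_2$ where the residual is measured, the level $k_1$ where completeness of the ambient space is enforced, and the loss-of-derivatives constants $d_1, d_2$, is precisely where the proof absorbs the loss and where it replaces Newton's quadratic convergence.
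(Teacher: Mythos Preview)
Your variational engine---apply Ekeland's principle, then defeat the approximate minimality with the descent direction $u=L(\bar x)(y-F(\bar x))$---is exactly the mechanism the paper uses (it is the Banach-space warm-up, Theorem~2, transplanted to Fr\'echet spaces). The gap is in the construction of the complete metric space, and it is not a technicality that can be patched along the lines you sketch.

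The pair $(M,d)$ with $M=\{x\in X:\|x\|_{k_1}\le r\}$ and $d(x,x')=\|x-x'\|_{k_0}$ is \emph{not} complete as a subset of $X$. A $\|\cdot\|_{k_0}$-Cauchy sequence in $M$ converges in $X_{k_0}$; compactness arguments of Ascoli/Rellich type (which, incidentally, you are not given for $X$, only for $Y=C^\infty$) at best place the limit in $X_{k_1}$, never in $X=\bigcap_k X_k$. So Ekeland would hand you $\bar x\in X_{k_1}\setminus X$, where $F$, $DF$, $L$ are not even defined. The second obstacle you flag is equally fatal and not removable by ``taking $r$ generously'': the variational principle gives no interior information in the $\|\cdot\|_{k_1}$-sense, so $\bar x$ may sit on $\partial M$ and $\bar x+tu\notin M$ for every $t>0$; smoothing $u$ does not help, since the constraint is on $\bar x$, not on $u$.

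The paper's resolution is to abandon any auxiliary level $k_1$ and work on all of $X$ with a genuine Fr\'echet metric
\[
d(x,x')=\sum_{k\ge 0}\alpha_k\min\{R,\|x-x'\|_k\},\qquad \alpha_k=\beta_{k+d_2}/m'_k,
\]
which \emph{is} complete, and to replace the single-norm residual by a weighted one,
\[
f(x)=\sum_{k\ge 0}\beta_k\,\|F(x)-y\|_k,
\]
with $\beta_k\ge 0$ of unbounded support decaying fast enough that $f(0)<\infty$ and $\sum_k\beta_k m_k m'_{k+d_1}n^k<\infty$ for every $n$. Now the Ekeland inequality can be tested at $\bar x+tu$ unobstructed (the ambient space is all of $X$), and $f(\bar x)=0$ forces $F(\bar x)=y$ in every norm. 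The price is that passing $t\to 0$ through the infinite sum requires dominated convergence; this is where the ``standard'' structure of $C^\infty$ enters: one approximates $v=F(\bar x)-y$ by controlled elements $v_n$ with $\|v_n\|_k\le c_0(v_n)^k$, sets $u_n=-L(\bar x)v_n$, and the growth condition on $\beta_k$ provides the dominating series. The specific choice of $\beta_k$ (Corollary~\ref{cor1}) then recovers the single-norm smallness hypothesis $\|y\|_{k_0+d_2}<R/m'_{k_0}$ and the estimate $\|\bar x\|_{k_0}\le m\|y\|_{k_0+d_2}$ that you were aiming for.
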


The full statement of our inverse function theorem and of its corollaries,
such as the implicit function theorem, will be given in the text (Theorems
\ref{Thm10} and \ref{cor3}). Note the main feature of our result: there is a
\emph{loss of derivatives} both for $DF\left(  x\right)  $, by condition
(\ref{01}), and for $L\left(  x\right)  $, by condition (\ref{03}).

Since the pioneering work of Andrei Kolmogorov and John Nash
(\cite{Kolmogorov}, \cite{Arnold}, \cite{Arnold2}, \cite{Arnold3},
\cite{Nash})\ , this loss of derivatives has been overcome by using the Newton
procedure:\ the equation $F\left(  x\right)  =y$ is solved by the iteration
scheme $x_{n+1}=x_{n}-L\left(  x_{n}\right)  F\left(  x_{n}\right)  $
(\cite{Schwartz}, \cite{Moser}, \cite{Moser2}; see \cite{Hamilton},
\cite{Alinhac} and \cite{BB} for more recent expositions). This method has two
drawbacks. The first one is that it requires the function $F$ to be $C^{2}$,
which is quite difficult to satisfy in infinite-dimensional situations. The
second is that it gives a set of admissible right-hand sides $y$ which is
unrealistically small: in practical situations, the equation $F\left(
x\right)  =y$ will continue to have a solution long after the Newton iteration
starting from $y$ ceases to converge.

Our method of is entirely different. It gives the solution of $F\left(
x\right)  =y$ directly by using Ekeland's variational principle (\cite{IE},
\cite{IE2}; see \cite{Ghoussoub} for later developments). Since the latter is
constructive, so is our proof, even if it does not rely on an iteration scheme
to solve the equation. To convey the idea of the method in a simple case, let
us now state and prove an inverse function theorem in Banach spaces:

\begin{theorem}
Let $X$ and $Y$ be Banach spaces. Let $F:X\rightarrow Y$ be continuous and
G\^{a}teaux-differentiable, with $F\left(  0\right)  =0$. Assume that the
derivative $DF\left(  x\right)  $ has a right-inverse $L\left(  x\right)  $,
uniformly bounded in a neighbourhood of $0$:%
\begin{align*}
&  \forall v\in Y,\ \ DF\left(  x\right)  L\left(  x\right)  v=v\ \\
&  \left\Vert x\right\Vert \leq R\Longrightarrow\left\Vert L\left(  x\right)
\right\Vert \leq m
\end{align*}

Then, for every $\bar{y}$ such that:%
\[
\left\Vert \bar{y}\right\Vert <\frac{R}{m}%
\]
and every $\mu>m$ there is some $\bar{x}$ such that:%
\begin{align*}
\left\Vert \bar{x}\right\Vert  &  <R\\
\left\Vert \bar{x}\right\Vert  &  \leq\mu\left\Vert \bar{y}\right\Vert
\end{align*}
and:%
\[
F\left(  \bar{x}\right)  =\bar{y}%
\]

\end{theorem}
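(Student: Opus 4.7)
The plan is to apply Ekeland's variational principle to the error functional
\[
\varphi(x) := \left\Vert F(x) - \bar{y}\right\Vert
\]
on the complete metric space $M := \{x \in X : \left\Vert x\right\Vert \le R\}$, on which $\varphi$ is continuous and nonnegative. Starting from the point $0$, where $\varphi(0) = \left\Vert \bar{y}\right\Vert$, I would set $\varepsilon := \left\Vert \bar{y}\right\Vert$ and choose a scale $\lambda > 0$ satisfying
\[
m\left\Vert \bar{y}\right\Vert < \lambda \le \min\bigl(R,\, \mu\left\Vert \bar{y}\right\Vert\bigr),
\]
which is possible because $\left\Vert \bar{y}\right\Vert < R/m$ and $\mu > m$; in the boundary case $\mu\left\Vert \bar{y}\right\Vert \ge R$ one simply takes $\lambda \in (m\left\Vert \bar{y}\right\Vert, R)$, since then $\left\Vert \bar{x}\right\Vert < R \le \mu\left\Vert \bar{y}\right\Vert$ comes for free. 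Ekeland's principle then produces $\bar{x} \in M$ with
\[
\left\Vert \bar{x}\right\Vert \le \lambda < R, \qquad \varphi(\bar{x}) \le \varphi(x) + \frac{\varepsilon}{\lambda}\left\Vert x - \bar{x}\right\Vert \text{ for all } x \in M,
\]
where by construction $\varepsilon/\lambda < 1/m$.

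The heart of the proof is to show that this near-minimizer actually solves the equation. Suppose for contradiction that $F(\bar{x}) \ne \bar{y}$, and set $u := L(\bar{x})\bigl(F(\bar{x}) - \bar{y}\bigr)$, so that $DF(\bar{x})u = F(\bar{x}) - \bar{y}$ and $\left\Vert u\right\Vert \le m\,\varphi(\bar{x})$. Because $\left\Vert \bar{x}\right\Vert < R$ strictly, the competitor $x_t := \bar{x} - tu$ lies in $M$ for all sufficiently small $t > 0$, and G\^{a}teaux differentiability along $-u$ yields
\[
F(x_t) - \bar{y} = (1-t)\bigl(F(\bar{x}) - \bar{y}\bigr) + o(t), \qquad \varphi(x_t) \le (1-t)\varphi(\bar{x}) + o(t).
\]
Inserting $x = x_t$ into the slope inequality, dividing by $t$, and letting $t \downarrow 0$ gives
\[
\varphi(\bar{x})\Bigl(1 - \frac{m\varepsilon}{\lambda}\Bigr) \le 0,
\]
and since the factor in parentheses is strictly positive, $\varphi(\bar{x}) = 0$, a contradiction. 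Hence $F(\bar{x}) = \bar{y}$, while the required bounds $\left\Vert \bar{x}\right\Vert < R$ and $\left\Vert \bar{x}\right\Vert \le \mu\left\Vert \bar{y}\right\Vert$ have been built into the Ekeland estimate.

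I expect the main obstacle to be purely a matter of constants: juggling the Ekeland parameters $(\varepsilon,\lambda)$ so that $\bar{x}$ lies in the \emph{open} ball of radius $R$ (so that the variational perturbation $x_t$ is admissible), so that the a priori bound $\left\Vert \bar{x}\right\Vert \le \mu\left\Vert \bar{y}\right\Vert$ holds, and so that the slope constant is \emph{strictly} smaller than $1/m$ (so that the right-inverse $L$ can overpower it in the linearization step). The hypotheses $\left\Vert \bar{y}\right\Vert < R/m$ and $\mu > m$ are exactly what leave enough slack for all three to hold at once; apart from that arithmetic bookkeeping and the split into the boundary case, the argument is a clean one-shot application of the variational principle, with no iteration or higher regularity of $F$ used.
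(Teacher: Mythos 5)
Your proof is correct and follows the same Ekeland-plus-right-inverse strategy as the paper, with one genuine simplification worth noting. The paper bounds the one-sided derivative of $t\mapsto\left\Vert F(\bar{x}+tu)-\bar{y}\right\Vert$ via the subdifferential of the norm (Proposition~\ref{p1}, Lemma~\ref{l2}): it picks $y^{\ast}\in N\bigl(F(\bar{x})-\bar{y}\bigr)$, identifies the derivative as $\langle y^{\ast},DF(\bar{x})u\rangle$, and then uses $\langle y^{\ast},F(\bar{x})-\bar{y}\rangle=\left\Vert F(\bar{x})-\bar{y}\right\Vert$. You avoid the subdifferential entirely: because the direction $u=L(\bar{x})\bigl(F(\bar{x})-\bar{y}\bigr)$ is chosen so that $DF(\bar{x})u$ is \emph{exactly} the error $F(\bar{x})-\bar{y}$, the G\^{a}teaux expansion collapses to $F(x_t)-\bar{y}=(1-t)\bigl(F(\bar{x})-\bar{y}\bigr)+o(t)$, and the triangle inequality gives the needed upper bound $\varphi(x_t)\leq(1-t)\varphi(\bar{x})+o(t)$ with no functional-analytic machinery at all. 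The two versions are quantitatively equivalent here, but your route is slightly more elementary; the paper keeps the subdifferential formulation because it is the one that survives in the Fr\'{e}chet setting of Theorem~\ref{thm2}, where the approximating directions $u_n$ are not an exact right-inverse image and the argument runs over an infinite series of $k$-norms. Your other small departure --- applying Ekeland on the closed ball $M=\{\left\Vert x\right\Vert\leq R\}$ rather than on all of $X$ and then shrinking $\mu$ --- is equally valid, and the bookkeeping with $\lambda$ and the boundary case $\mu\left\Vert\bar{y}\right\Vert\geq R$ is handled correctly.
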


\begin{proof}
Consider the function $f:X\rightarrow R$ defined by:%
\[
f\left(  x\right)  =\left\Vert F\left(  x\right)  -\bar{y}\right\Vert
\]

It is continuous and bounded from below, so that we can apply Ekeland's
variational principle. For every $r>0$, we can find a point $\bar{x}$ such
that:
\begin{align*}
f\left(  \bar{x}\right)   &  \leq f\left(  0\right) \\
\left\Vert \bar{x}\right\Vert  &  \leq r\\
\forall x,\ \ \ f\left(  x\right)   &  \geq f\left(  \bar{x}\right)
-\frac{f\left(  0\right)  }{r}\left\Vert x-\bar{x}\right\Vert
\end{align*}

Note that $f\left(  0\right)  =\left\Vert \bar{y}\right\Vert $. Take
$r=\mu\left\Vert \bar{y}\right\Vert $, so that:%
\begin{align}
f\left(  \bar{x}\right)   &  \leq f\left(  0\right)  \nonumber\\
\left\Vert \bar{x}\right\Vert  &  \leq\mu\left\Vert \bar{y}\right\Vert
\nonumber\\
\forall x,\ \ \ f\left(  x\right)   &  \geq f\left(  \bar{x}\right)  -\frac
{1}{\mu}\left\Vert x-\bar{x}\right\Vert \label{s}%
\end{align}

The point $\bar{x}$ satisfies the inequality $\left\Vert \bar{x}\right\Vert
\leq\mu\left\Vert \bar{y}\right\Vert $. Since $m<R\left\Vert \bar
{y}\right\Vert ^{-1}$, we can assume without loss of generality that
$\mu<R\left\Vert \bar{y}\right\Vert ^{-1}$, so $\left\Vert \bar{x}\right\Vert
<R$. It remains to prove that $F\left(  \bar{x}\right)  =\bar{y}$.

We argue by contradiction. Assume not, so $F\left(  \bar{x}\right)  \neq
\bar{y}$. Then, write the inequality (\ref{s}) with $x=\bar{x}+tu$. We get:%
\begin{equation}
\forall t>0,\ \forall u\in X,\ \ \ \frac{\left\Vert F\left(  \ \bar
{x}+tu\right)  -\bar{y}\right\Vert -\left\Vert F\left(  \bar{x}\right)
-\bar{y}\right\Vert }{t}\geq-\frac{1}{\mu}\left\Vert u\right\Vert \label{q20}%
\end{equation}

As we shall see later on (Lemma \ref{l2}), the function $t\rightarrow
\left\Vert F\left(  \ \bar{x}+tu\right)  -\bar{y}\right\Vert $ is
right-differentiable at $t=0$, and its derivative is given by:%
\[
\lim_{\substack{t\rightarrow0 \\t>0}}\frac{\left\Vert F\left(  \ \bar
{x}+tu\right)  -\bar{y}\right\Vert -\left\Vert F\left(  \bar{x}\right)
-\bar{y}\right\Vert }{t}=\ <y^{\ast},DF\left(  \bar{x}\right)  u>
\]
for some $y^{\ast}\in Y^{\ast}$ with $\left\Vert y^{\ast}\right\Vert ^{\ast
}=1$ and $<y^{\ast},F\left(  \bar{x}\right)  -\bar{y}>\ =\left\Vert F\left(
\bar{x}\right)  -\bar{y}\right\Vert $. In the particular case when $Y$ is a
Hilbert space, we have
\[
y^{\ast}=\frac{F\left(  \bar{x}\right)  -\bar{y}}{\left\Vert F\left(  \bar
{x}\right)  -\bar{y}\right\Vert }%
\]
Letting $t\rightarrow0$ in (\ref{q20}), we get:%
\[
\forall u,\ \ <y^{\ast},DF\left(  \bar{x}\right)  u>\ \geq-\frac{1}{\mu
}\left\Vert u\right\Vert
\]

We now take $u=-L\left(  \bar{x}\right)  \left(  F\left(  \bar{x}\right)
-\bar{y}\right)  $, so that\ $DF\left(  \bar{x}\right)  u=-\left(  F\left(
\bar{x}\right)  -\bar{y}\right)  $. The preceding inequality yields:%
\[
\left\Vert F\left(  \bar{x}\right)  -\bar{y}\right\Vert \leq\frac{1}{\mu
}\left\Vert u\right\Vert \leq\frac{m}{\mu}\left\Vert F\left(  \bar{x}\right)
-\bar{y}\right\Vert
\]
which is a contradiction since $\mu>m$.
\end{proof}

The reader will have noted that this is much stronger than the usual inverse
function theorem in Banach spaces: we do not require that $F$ be
Fr\'{e}chet-differentiable, nor that the derivative $DF\left(  x\right)  $ or
its inverse $L\left(  x\right)  $ depend continuously on $x$. All that is
required is an upper bound on $L\left(  x\right)  $. Note that it is very
doubtful that, with such weak assumptions, the usual Euler or Newton iteration
schemes would converge.

Our inverse function theorem will extend this idea to Fr\'{e}chet spaces.
Ekeland's variational principle holds for any complete metric space, hence on
Fr\'{e}chet spaces. Our first result, Theorem \ref{thm2}, depends on the
choice of a\ non-negative sequence $\beta_{k}$ wich converges rapidly to zero.
An appropriate choice of $\beta_{k}$ will lead to the facts that $F$ is a
local surjection (Corollary \ref{cor1}), that the (multi-valued) local inverse
$F^{-1}$ satisfies a Lipschitz condition (Corollary \ref{cor5}), and that the
result holds also with finite regularity (if $\bar{y}$ does not belong to
$Y_{k}$ for every $k$, but only to $Y_{k_{0}+d_{2}}$, then we can still solve
$F\left(  \bar{x}\right)  =\bar{y}$ with $\bar{x}\in X_{k_{0}}$). These
results are gathered together in Theorem \ref{Thm10}, which is our final
inverse function theorem. As usual, an implicit function theorem can be
derived; it is given in Theorem \ref{cor3}.

The structure of the paper is as follows. Section 2 introduces the basic
definitions. There will be no requirement on $X$, while $Y$ will be asked to
belong to a special class of graded Fr\'{e}chet spaces.\ This class is much
larger than the one used in the Nash-Moser literature:\ it is not required
that $X$ or $Y$ be tame in the sense of Hamilton \cite{Hamilton} or admit
smoothing operators. Section 3 states Theorem \ref{thm2} and derives the other
results. The proof of Theorem \ref{thm2} is given in Section 4.

Before we proceed, it will be convenient to recall some well-known facts
about differentiability in Banach spaces. Let $X$ be a Banach space.
Let $\mathcal{U}$ be some open subset of $X$, and
let $F$ be a map from $\mathcal{U}$ into some Banach space $Y$.

\begin{definition}
$F$ is \emph{G\^{a}teaux-differentiable} at $x\in \mathcal{U}$ if there exists some linear
continuous map from $X$ to $Y$, denoted by $DF\left(  x\right)  $, such that:%
\[
\forall u\in X,\ \ \ \lim_{t\longrightarrow0}\left\Vert \frac{F\left(
x+tu\right)  -F\left(  x\right)  }{t}-DF\left(  x\right)  u\right\Vert =0
\]

\end{definition}

\begin{definition}
$F$ is \emph{Fr\'{e}chet-differentiable} at $x\in \mathcal{U}$ if it is
G\^{a}teaux-differentiable and:%
\[
\lim_{u\rightarrow0}\frac{F\left(  x+u\right)  -F\left(  x\right)  -DF\left(
x\right)  u}{\left\Vert u\right\Vert }=0
\]

\end{definition}

Fr\'{e}chet-differentiability implies G\^{a}teaux-differentiability and
continuity, but G\^{a}teaux-differentiability does not even imply continuity.

If $Y$ is a Banach space, then the norm $y\longrightarrow\left\Vert
y\right\Vert $ is convex and continuous, so that it has a non-empty
subdifferential $N\left(  y\right)  \subset Y^{\ast}$ at every point $y:$%
\[
y^{\ast}\in N\left(  y\right)  \Longleftrightarrow\forall z\in Y,\ \left\Vert
y+z\right\Vert \geq\left\Vert y\right\Vert +\left\langle y^{\ast
},z\right\rangle
\]

When $y\neq 0$ we have the alternative characterization:%
\[
y^{\ast}\in N\left(  y\right)  \Longleftrightarrow\left\Vert y^{\ast
}\right\Vert ^{\ast}=1\text{ and }\left\langle y^{\ast},y\right\rangle
=\left\Vert y\right\Vert
\]

$N\left(  y\right)  $ is convex and compact in the $\sigma\left(  X^{\ast
},X\right)  $-topology. It is a singleton if and only if the norm is
G\^{a}teaux-differentiable at $y$. Its unique element then is the
G\^{a}teaux-derivative of $F$ at $y$:%
\[
N\left(  y\right)  =\left\{  y^{\ast}\right\}  \Longleftrightarrow DF\left(
x\right)  =y^{\ast}%
\]

If $N\left(  y\right)  $ contains several elements, the norm is not
G\^{a}teaux-differentiable at $y$, and the preceding relation is then replaced
by the following classical result (see for instance Theorem 11 p. 34 in
\cite{Rockafellar}):

\begin{proposition}
\label{p1}Take $y$ and $z$ in $Y$.
Then there is some $y^{\ast}\in
N\left(  y\right)  $ (depending possibly on $z$) such that:%
\[
\lim_{\substack{t\rightarrow0 \\t>0}}\frac{\left\Vert y+tz\right\Vert
-\left\Vert y\right\Vert }{t}=\ <y^{\ast},z>
\]

\end{proposition}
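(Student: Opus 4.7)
The plan is to exploit that $\phi(y) := \|y\|$ is a continuous sublinear (hence convex) function on $Y$, and to apply the standard theory of one-sided directional derivatives of convex functions, culminating in a Hahn--Banach extension.

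First I would verify that the limit on the left-hand side actually exists. For fixed $y, z$ and $t>0$, set
\[
q(t) := \frac{\|y+tz\|-\|y\|}{t}.
\]
By convexity of $\phi$, the map $t \mapsto q(t)$ is non-decreasing on $(0,\infty)$: if $0<s<t$, then $y+sz = \frac{s}{t}(y+tz) + (1-\frac{s}{t})y$, so $\phi(y+sz) \le \frac{s}{t}\phi(y+tz) + (1-\frac{s}{t})\phi(y)$, which rearranges to $q(s) \le q(t)$. Moreover $q(t) \ge -\|z\|$ by the reverse triangle inequality. Hence the one-sided derivative
\[
\phi'(y;z) := \lim_{\substack{t\to 0 \\ t>0}} q(t) = \inf_{t>0} q(t)
\]
exists and is finite.

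Next I would show that $z \mapsto \phi'(y;z)$ is sublinear on $Y$. Positive homogeneity is immediate from the definition. Subadditivity follows because $\phi$ itself is subadditive: for $t>0$,
\[
\frac{\|y + t(z_1+z_2)\| - \|y\|}{t} \le \frac{\|\tfrac{1}{2}y+tz_1\| + \|\tfrac{1}{2}y+tz_2\| - \|y\|}{t},
\]
and splitting and passing to the limit yields $\phi'(y;z_1+z_2) \le \phi'(y;z_1) + \phi'(y;z_2)$. Also $\phi'(y;z) \le \|z\|$, so $\phi'(y;\cdot)$ is continuous.

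Now, for the given $z$, the Hahn--Banach theorem produces a linear functional $y^{\ast} \in Y^{\ast}$ such that
\[
\langle y^{\ast},z\rangle = \phi'(y;z) \quad \text{and} \quad \langle y^{\ast}, w\rangle \le \phi'(y;w) \ \text{ for all } w\in Y.
\]
It remains to verify $y^{\ast} \in N(y)$. Taking $t=1$ in the monotonicity of $q$ gives $\phi'(y;w) \le \|y+w\| - \|y\|$ for every $w$, hence
\[
\|y+w\| \ge \|y\| + \langle y^{\ast}, w\rangle \qquad \forall w\in Y,
\]
which is exactly the defining inequality for $y^{\ast} \in N(y)$. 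This completes the proof.

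The only delicate step is the Hahn--Banach extension: one must be careful that the sublinear majorant $\phi'(y;\cdot)$ is defined and finite on all of $Y$ (which follows from $|\phi'(y;w)| \le \|w\|$), so that dominating a linear functional defined on the one-dimensional subspace $\mathbb{R}z$ by this sublinear functional is legitimate. Every other step is a routine manipulation of convex difference quotients.
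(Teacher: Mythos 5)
Your proof is correct. The paper gives no argument of its own for Proposition~\ref{p1}; it simply cites Rockafellar (Theorem~11, p.~34 of \cite{Rockafellar}), so there is nothing in the text to compare against step by step. What you have written is exactly the standard convex-analysis derivation behind that citation: monotonicity of the difference quotients $t\mapsto(\|y+tz\|-\|y\|)/t$ gives the one-sided derivative $\phi'(y;z)$ as an infimum; $\phi'(y;\cdot)$ is sublinear and satisfies $|\phi'(y;\cdot)|\leq\|\cdot\|$; Hahn--Banach then extends $\lambda z\mapsto\lambda\phi'(y;z)$ from $\mathbb{R}z$ to a linear $y^{\ast}$ dominated by $\phi'(y;\cdot)$, hence bounded with $\|y^{\ast}\|^{\ast}\leq 1$; and evaluating the monotone quotient at $t=1$ gives $\phi'(y;w)\leq\|y+w\|-\|y\|$, which turns the domination into the subdifferential inequality defining $N(y)$. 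The one point worth making explicit, which you gesture at, is that domination on $\mathbb{R}z$ for $\lambda<0$ rests on $\phi'(y;z)+\phi'(y;-z)\geq\phi'(y;0)=0$ from subadditivity; with that noted, every step is sound, and the argument also covers the degenerate case $y=0$, where $N(0)$ is the closed unit ball of $Y^{\ast}$.
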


The following result will be used repeatedly:

\begin{lemma}
\label{l2}Assume $F:X\longrightarrow Y$ is G\^{a}teaux-differentiable. Take
$x$ and $\xi$ in $X$, and define a function $f:\left[  0,\ 1\right]
\rightarrow R$ by:%
\[
f\left(  t\right)  :=\left\Vert F\left(  x+t\xi\right)  \right\Vert ,\ \ 0\leq
t\leq1
\]

Then $f$ has a right-derivative everywhere, and there is some $y^{\ast}\in
N\left(  F\left(  x+t\xi\right)  \right)  $ such that:%
\[
\lim_{\substack{h\rightarrow0 \\h>0}}\frac{f\left(  t+h\right)  -f\left(
t\right)  }{h}=\ <y^{\ast},DF\left(  x+t\xi\right)  \xi>
\]

\end{lemma}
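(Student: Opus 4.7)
The plan is to reduce the statement to Proposition \ref{p1} by using the Gâteaux-differentiability of $F$ at the point $x + t\xi$ in the direction $\xi$.

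First I would fix $t \in [0,1]$, set $y := F(x + t\xi)$ and $z := DF(x + t\xi)\xi$, and write
\[
F(x + (t+h)\xi) = y + h z + r(h),
\]
where $r(h) := F(x + t\xi + h\xi) - F(x+t\xi) - h\, DF(x+t\xi)\xi$. Gâteaux-differentiability at $x + t\xi$, applied with the direction $u = \xi$, is exactly the statement that $\|r(h)\|/h \to 0$ as $h \to 0$.

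Next I would use the reverse triangle inequality to write
\[
\bigl|\,\|F(x+(t+h)\xi)\| - \|y + h z\|\,\bigr| \le \|r(h)\|,
\]
so that
\[
\frac{f(t+h) - f(t)}{h} = \frac{\|y + h z\| - \|y\|}{h} + \varepsilon(h),
\]
with $|\varepsilon(h)| \le \|r(h)\|/h \to 0$ as $h \to 0^+$. Applying Proposition \ref{p1} with the given $y$ and $z$, the first term on the right converges as $h \to 0^+$ to $\langle y^*, z\rangle$ for some $y^* \in N(y) = N(F(x+t\xi))$. Combining these two facts gives the claimed formula for the right-derivative.

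The only real subtlety is making sure that the Gâteaux-differentiability, which a priori is a statement about limits in each individual direction, is exactly what is needed here: one might worry that Fréchet-differentiability is required to control the remainder, but because we move along the single fixed direction $\xi$, the one-directional Gâteaux estimate $\|r(h)\| = o(h)$ suffices. After that, everything else is an application of the convex-analytic fact quoted in Proposition \ref{p1}, so there is no further obstacle.
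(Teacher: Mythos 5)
Your proof is correct and follows essentially the same route as the paper: both decompose the difference quotient into the part controlled by Proposition \ref{p1} plus a remainder bounded by the Gâteaux error (your $r(h)/h$ is the paper's $z(h)-z(0)$), and both use the triangle inequality on norms to discard that remainder. There is no substantive difference.
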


\begin{proof}
We have:
\begin{align}
h^{-1}\left[  f\left(  t+h\right)  -f\left(  t\right)  \right]   &
=h^{-1}\left[  \left\Vert F\left(  x+\left(  t+h\right)  \xi\right)
\right\Vert -\left\Vert F\left(  x+t\xi\right)  \right\Vert \right]
\nonumber\\
&  =h^{-1}\left[  \left\Vert F\left(  x+t\xi\right)  +hz\left(  h\right)
\right\Vert -\left\Vert F\left(  x+t\xi\right)  \right\Vert \right]
\label{q50}%
\end{align}
with:%
\[
z\left(  h\right)  =\frac{F\left(  x+\left(  t+h\right)  \xi\right)  -F\left(
x+t\xi\right)  }{h}%
\]

Since $F$ is G\^{a}teaux-differentiable, we have:%
\[
\lim_{h\rightarrow0}z\left(  h\right)  =DF\left(  x+t\xi\right)  \xi:=z\left(
0\right)
\]

By the triangle inequality, we have:%
\[
\bigl\vert \left\Vert F\left(  x+t\xi\right)  +hz\left(  h\right)  \right\Vert
-\left\Vert F\left(  x+t\xi\right)  +hz\left(  0\right)  \right\Vert
\bigr\vert \leq h\left\Vert z\left(  h\right)  -z\left(  0\right)
\right\Vert
\]

Writing this into (\ref{q50}) and using Proposition \ref{p1}, we find;%
\begin{align*}
\lim_{\substack{h\rightarrow0 \\h>0}}\frac{\left[  f\left(  t+h\right)
-f\left(  t\right)  \right]  }{h}  &  =\lim_{\substack{h\rightarrow0
\\h>0}}\frac{\left\Vert F\left(  x+t\xi\right)  +hz\left(  0\right)
\right\Vert -\left\Vert F\left(  x+t\xi\right)  \right\Vert }{h}\\
&  =\ <y^{\ast},DF\left(  x+t\xi\right)  \xi>
\end{align*}
for some $y^{\ast}\in N\left(  F\left(  x+t\xi\right)  \right)  $. This is the
desired result.
\end{proof}

One last word. Throughout the paper, we shall use the following:

\begin{definition}
A sequence $\alpha_{k}$ has \emph{unbounded support} if $\sup\left\{
k\ |\ \alpha_{k}\neq0\right\}  =\infty\,.$
\end{definition}

\section{The Inverse Function Theorem}

Let $X=\cap_{k\geq0}X_{k}$ be a graded Fr\'{e}chet space. The following result
is classical:

\begin{proposition}
\label{p2}Let $\alpha_{k}\geq0$ be a sequence with unbounded support such that
$\sum\alpha_{k}<\infty$. Let $r>0$ be a positive number. Then the topology of
$X$ is induced by the distance:%
\begin{equation}
d\left(  x,y\right)  :=\sum_{k}\alpha_{k}\min\left\{  r,\left\Vert
x-y\right\Vert _{k}\right\}  \label{x1}%
\end{equation}
and $x_{n}$ is a Cauchy sequence for $d$ if and only if it is a Cauchy
sequence for all the $k$-norms. It follows that $\left(  X,d\right)  $ is a
complete metric space.
\end{proposition}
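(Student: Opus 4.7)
The plan is to (i) verify that $d$ is a metric, (ii) show that $d$-convergence is equivalent to convergence in every $k$-norm (and likewise for Cauchy sequences), and (iii) deduce completeness from the Banach completeness of each $X_k$.

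First, symmetry and the triangle inequality for $d$ follow termwise, since $t\mapsto\min\{r,t\}$ is subadditive and monotone on $[0,\infty)$ and each $\|\cdot\|_k$ is a norm. Positive definiteness uses the hypothesis that $\alpha_k$ has unbounded support: if $d(x,y)=0$ then every term vanishes, and picking any $k$ with $\alpha_k>0$ forces $\|x-y\|_k=0$, so $x=y$ in $X$.

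For (ii), since both topologies are metrizable it is enough to compare sequential convergence. Suppose first that $d(x_n,\bar x)\to 0$. For a given $k$, choose $k'\geq k$ with $\alpha_{k'}>0$, which exists by unbounded support. Then $\alpha_{k'}\min\{r,\|x_n-\bar x\|_{k'}\}\leq d(x_n,\bar x)\to 0$, so $\|x_n-\bar x\|_{k'}\to 0$, and by monotonicity of the norms $\|x_n-\bar x\|_k\leq\|x_n-\bar x\|_{k'}\to 0$. Conversely, suppose $\|x_n-\bar x\|_k\to 0$ for every $k$. Given $\varepsilon>0$, pick $K$ so that $r\sum_{k>K}\alpha_k<\varepsilon/2$, which is possible since $\sum\alpha_k<\infty$; then for all large $n$, $d(x_n,\bar x)\leq\sum_{k\leq K}\alpha_k\|x_n-\bar x\|_k+\varepsilon/2<\varepsilon$. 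The same estimate with $x_m-x_n$ in place of $x_n-\bar x$ proves the Cauchy equivalence.

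For (iii), let $\{x_n\}$ be $d$-Cauchy. By (ii) it is Cauchy in each Banach space $X_k$, and so has a limit $\bar x^{(k)}\in X_k$. Continuity of the inclusion $i_k:X_{k+1}\to X_k$ gives $i_k(\bar x^{(k+1)})=\bar x^{(k)}$, so the sequence $(\bar x^{(k)})$ represents a single element $\bar x\in\bigcap_k X_k=X$, and $\|x_n-\bar x\|_k\to 0$ for every $k$, hence $d(x_n,\bar x)\to 0$ by (ii). The only mildly delicate point is the role of unbounded support in step (ii); without it $d$ would be controlled by only finitely many seminorms and would induce a strictly weaker topology. The remaining estimates are standard termwise computations.
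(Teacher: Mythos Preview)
The paper does not prove this proposition; it simply introduces it with ``The following result is classical'' and moves on. Your argument is correct and is exactly the standard verification one would give: termwise subadditivity of $t\mapsto\min\{r,t\}$ for the triangle inequality, the unbounded-support hypothesis to recover every $k$-norm from $d$, a tail estimate using $\sum\alpha_k<\infty$ for the converse direction, and compatibility of limits through the injections $i_k$ for completeness.

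One small remark on step (i): since the paper stipulates that each $\|\cdot\|_k$ is a genuine norm (not a seminorm), $\|x-y\|_k=0$ for a \emph{single} index $k$ already forces $x=y$; so positive definiteness needs only that some $\alpha_k>0$. The full strength of unbounded support is used precisely where you say, in step (ii), to ensure that $d$ controls \emph{every} $k$-norm via some $k'\ge k$ with $\alpha_{k'}>0$. This is a cosmetic point and does not affect the validity of your proof.
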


The main analytical difficulty with Fr\'{e}chet spaces is that, given $x\in X
$, there is no information on the sequence $k\rightarrow\left\Vert
x\right\Vert _{k}$, except that it is positive and increasing. For instance,
it can grow arbitrarily fast. So we single out some elements $x$ such that
$\left\Vert x\right\Vert _{k}$ has at most exponential growth in $k\,.$

\begin{definition}
A point $x\in X$ is \emph{controlled} if there is a constant $c_{0}\left(
x\right)  $ such that:%
\begin{equation}
\left\Vert x\right\Vert _{k}\leq c_{0}\left(  x\right)  ^{k} \label{111}%
\end{equation}

\end{definition}

\begin{definition}
A graded Fr\'{e}chet space is \emph{standard} if, for every $x\in X$, there is
a constant $c_{3}\left(  x\right)  $ and a sequence $x_{n}$ such that:%
\begin{align}
\ \ \forall k\ \ \ \lim_{n\rightarrow\infty}\left\Vert x_{n}-x\right\Vert
_{k}  &  =0\label{50}\\
\forall n,\ \ \left\Vert x_{n}\right\Vert _{k}  &  \leq c_{3}\left(  x\right)
\left\Vert x\right\Vert _{k} \label{51}%
\end{align}
and each $x_{n}$ is controlled:%
\begin{equation}
\left\Vert x_{n}\right\Vert _{k}\leq c_{0}\left(  x_{n}\right)  ^{k}
\label{510}%
\end{equation}

\end{definition}

\begin{proposition}
The graded Fr\'{e}chet spaces $C^{\infty}\left( \bar{\Omega},\mathbb{R}^{d}\right)  =\cap
C^{k}\left( \bar{\Omega},\mathbb{R}^{d}\right)  $ and\break $C^{\infty}\left(  \bar{\Omega},\mathbb{R}^{d}\right)
=\cap H^{k}\left(  \Omega,\mathbb{R}^{d}\right)  $ are both standard.
\end{proposition}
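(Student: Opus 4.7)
The plan is to produce, for each $x\in C^\infty(\bar\Omega,\mathbb R^d)$, an approximating sequence $(x_n)\subset C^\infty(\bar\Omega,\mathbb R^d)$ satisfying the three conditions of standardness, with a single construction that works for both the $C^k$ and $H^k$ gradings. The natural candidates $x_n$ are smoothings of $x$; the convergence $\|x_n-x\|_k\to 0$ for every $k$ is then routine, and the real content lies in the two inequalities.

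The main obstacle is the uniform bound $\|x_n\|_k\le c_3(x)\|x\|_k$ with $c_3(x)$ independent of $k$. A naive strategy of extending $x$ via a Seeley or Stein operator $\tilde x\in C_c^\infty(\mathbb R^N,\mathbb R^d)$ and then mollifying fails, because the standard extension operators are bounded $H^k(\Omega)\to H^k(\mathbb R^N)$ (resp.\ $C^k(\bar\Omega)\to C^k(\mathbb R^N)$) only with operator norm that grows with $k$. I would overcome this by a dilation trick. After a partition of unity, assume $\Omega$ is star-shaped with respect to $0$, and set $y_n(y):=x(t_ny)$ for a sequence $t_n\nearrow 1$ with $t_n\in[1/2,1)$. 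Then $y_n$ is smooth on $(1/t_n)\bar\Omega$, a compact set strictly containing $\bar\Omega$, and the change of variable $z=t_ny$ gives
\[
\|y_n\|_{H^k(\Omega)}^{2} \;=\; \sum_{|\alpha|\le k} t_n^{2|\alpha|-N}\int_{t_n\Omega}|D^\alpha x|^2\,dz \;\le\; t_n^{-N}\|x\|_{H^k(\Omega)}^{2},
\]
and similarly $\|y_n\|_{C^k(\bar\Omega)}\le\|x\|_{C^k(\bar\Omega)}$, both uniform in $k$ because the factor $t_n^{|\alpha|}\le 1$ absorbs every derivative order.

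To obtain controlledness $\|x_n\|_k\le c_0(x_n)^k$, mollification by a fixed compactly supported $\rho\in C_c^\infty$ does not suffice, since $\|D^\alpha\rho\|_{L^1}$ grows faster than any exponential in $|\alpha|$ (no nontrivial compactly supported function is quasi-analytic). Instead I would use a Fourier truncation, exploiting the collar the dilation has bought: take $\chi_n\in C_c^\infty((1/t_n)\Omega)$ with $\chi_n\equiv 1$ on a $\delta_n$-neighbourhood of $\bar\Omega$, extend $\chi_n y_n$ by zero to $\mathbb R^N$, and set
\[
x_n\;:=\;\mathcal F^{-1}\bigl[\psi(\xi/n)\,\widehat{\chi_n y_n}(\xi)\bigr]\Big|_{\bar\Omega},
\]
with $\psi\in C_c^\infty(\mathbb R^N)$ a fixed bump equal to $1$ near $0$. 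By Paley--Wiener, $x_n$ is the restriction of a bandlimited entire function, so Bernstein's inequality gives $\|D^\alpha x_n\|_\infty\le(Cn)^{|\alpha|}\|x_n\|_\infty$ and hence $\|x_n\|_k\le c_0(x_n)^k$. Finally, multiplication by $\psi(\xi/n)$ is a contraction on every $H^k(\mathbb R^N)$ by Plancherel, and on $\bar\Omega$ the cutoff $\chi_n$ is identically $1$, so derivatives of $\chi_n$ never enter the top-order estimates; combined with the dilation bound this yields $\|x_n\|_k\le\|y_n\|_k\le 2^{N/2}\|x\|_k$, uniformly in both $n$ and $k$, establishing standardness for both gradings.
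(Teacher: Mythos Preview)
The paper's own proof is a one-paragraph black box: it quotes from \cite{Hamilton} and \cite{Alinhac} the existence of smoothing operators $S_n$ satisfying $\|S_n x\|_{k+d}\le c_1 n^{d}\|x\|_k$ with a single constant $c_1$, and then reads off all three properties by taking $x_n=S_n x$, $d=0$ (for the uniform bound) and $k=0$ (for controlledness). Your attempt to build such operators by hand is more ambitious, but as written it has two genuine gaps, both of which attack precisely the uniform-in-$k$ bound $\|x_n\|_k\le c_3(x)\|x\|_k$ that you yourself identify as the crux.

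First, the sentence ``After a partition of unity, assume $\Omega$ is star-shaped'' hides a fatal loss. If $x=\sum_i\phi_i x$ and you approximate each localized piece, the reassembled sequence satisfies $\|x_n\|_k\le c\sum_i\|\phi_i x\|_k$, and by Leibniz $\|\phi_i x\|_k$ is bounded by $\|x\|_k$ only with a constant involving $\max_{|\beta|\le k}\|D^\beta\phi_i\|_\infty$, which grows with $k$. The same problem arises if you try to replace the partition of unity by a global diffeomorphism onto a star-shaped domain: composition with a fixed smooth map is bounded $C^k\to C^k$ and $H^k\to H^k$ only with $k$-dependent operator norm. Your dilation trick is genuinely nice on star-shaped domains, but it does not globalize for free.

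Second, the claim that ``on $\bar\Omega$ the cutoff $\chi_n$ is identically $1$, so derivatives of $\chi_n$ never enter the top-order estimates'' is not correct. The Fourier multiplier $\psi(\xi/n)$ is convolution by a kernel that is \emph{not} compactly supported, so the restriction of $\mathcal F^{-1}[\psi(\xi/n)\widehat{\chi_n y_n}]$ to $\bar\Omega$ depends on the values of $\chi_n y_n$ in the collar where $\chi_n$ varies. The only honest inequality you have is $\|x_n\|_{H^k(\Omega)}\le\|\chi_n y_n\|_{H^k(\mathbb R^N)}$ (or its $C^k$ analogue with the $L^1$ norm of $\mathcal F^{-1}\psi$), and $\|\chi_n y_n\|_{k}$ again picks up $\max_{|\beta|\le k}\|D^\beta\chi_n\|_\infty$; worse, since the collar $(1/t_n)\Omega\setminus\bar\Omega$ shrinks as $t_n\nearrow 1$, these derivatives of $\chi_n$ blow up in $n$ as well. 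So neither the $k$-uniformity nor the $n$-uniformity of $c_3(x)$ survives this step.
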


\begin{proof}
In fact, much more is true. It can be proved (see \cite{Hamilton},
\cite{Alinhac} Proposition II.A.1.6) that both admit a family of smoothing
operators $S_{n}:X\rightarrow X$ satisfying:

\begin{enumerate}
\item $\left\Vert S_{n}x-x\right\Vert _{k}\rightarrow0$ when $n\rightarrow
\infty$

\item $\left\Vert S_{n}x\right\Vert _{k+d}\leq c_{1}n^{d}\left\Vert
x\right\Vert _{k}\ \ \ \forall d\geq0,\ \forall k\geq0,\ \forall n\geq0$

\item $\left\Vert \left(  I-S_{n}\right)  x\right\Vert _{k}\leq c_{2}%
n^{-d}\left\Vert x\right\Vert _{k+d}\ \ \ \forall d\geq0,\ \forall
k\geq0,\ \forall n\geq0$
\end{enumerate}

where $c_{1}$ and $c_{2}$ are positive constants. For every $x\in X$,
condition (2) with $k=0$ implies that:%
\[
\left\Vert S_{n}x\right\Vert _{d}\leq c_{1}n^{d}\left\Vert x\right\Vert _{0}%
\]

So (\ref{111}) is satisfied and the point $x_{n}=S_{n}x$ is controlled.
Condition (\ref{50}) follows from (1) and condition (\ref{51}) follows from
(2) with $d=0$.
\end{proof}

Now consider two graded Fr\'{e}chet spaces $X=\cap_{k\geq0}X_{k}$ and
$Y=\cap_{k\geq0}Y_{k}$. We are given a map $F:X\longrightarrow Y$, a number
$\ R\in(0,\infty]$ and an integer $k_{0}\geq0$. We consider the ball:
\[
B_{X}\left(  k_{0},R\right)  :=\left\{  x\in X\ |\ \left\Vert x\right\Vert
_{k_{0}}<R\right\}
\]

Note that $R=+\infty$ is allowed; in that case, $B_{X}\left(  k_{0},R\right)
=X$.

\begin{theorem}
\label{thm2}Assume $Y$ is standard. Let there be given two integers
$d_{1},\ d_{2}$ and two non-decreasing sequences $m_{k}>0,\ m_{k}^{\prime}>0$.
Assume that, on $B_{X}\left(  k_{0},R\right)  $, the map $F$ satisfies the
following conditions:

\begin{enumerate}
\item $F\left(  0\right)  =0$

\item $F$ is continuous, and G\^{a}teaux-differentiable

\item For every $u\in X$ there is a number $c_{1}\left(  u\right)  >0$ for
which:
\[
\forall k\in\mathbb{N},\ \ \ \left\Vert DF\left(  x\right)  u\right\Vert
_{k}\leq c_{1}\left(  u\right)  \left(  m_{k}\left\Vert u\right\Vert
_{k+d_{1}}+\left\Vert F\left(  x\right)  \right\Vert _{k}\right)
\]

\item There exists a linear map $L\left(  x\right)  :Y\longrightarrow X$ such that
$DF\left(  x\right)  L\left(  x\right)  =I_{Y}$: $\ $%
\[
\forall v\in Y,\ DF\left(  x\right)  L\left(  x\right)  v=v
\]

\item For every $v\in Y$:
\[
\forall k\in\mathbb{N},\ \ \ \left\Vert L\left(  x\right)  v\right\Vert
_{k}\leq m_{k}^{\prime}\left\Vert v\right\Vert _{k+d_{2}}%
\]

\end{enumerate}

Let $\beta_{k}\geq0$ be a sequence with unbounded support satisfying:%
\begin{equation}
\forall n\in\mathbb{N},\ \ \ \sum_{k=0}^{\infty}\beta_{k}m_{k}m_{k+d_{1}%
}^{\prime}n^{k}<\infty\ \label{y20}%
\end{equation}
Then, for every $\bar{y}\in Y$ such that:
\begin{equation}
\sum_{k=0}^{\infty}\beta_{k}\left\Vert \bar{y}\right\Vert _{k}<\frac
{\beta_{k_{0}+d_{2}}}{m_{k_{0}}^{\prime}}R\label{y21}%
\end{equation}
there exists a point $\bar{x}\ $such that:%
\begin{align}
F\left(  \bar{x}\right)   &  =\bar{y}\label{y22}\\
\left\Vert \bar{x}\right\Vert _{k_{0}} &  <R\label{t220}%
\end{align}

\end{theorem}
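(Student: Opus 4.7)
The plan is to transport the Banach-space argument of the preceding theorem to the Fréchet setting by applying Ekeland's variational principle to the Lyapunov functional
\[
\varphi(x):=\sum_{k\ge 0}\beta_k\|F(x)-\bar y\|_k,
\]
extended by $+\infty$ outside $B_X(k_0,R)$, on the complete metric space $(X,d)$ provided by Proposition \ref{p2}. The decisive step is coupling the two weight sequences: I plan to set $\alpha_k:=\beta_{k+d_2}/m_k'$ and choose the truncation $r>R$, so that (\ref{y21}) reads exactly $\varphi(0)<\alpha_{k_0}R$.

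First I would apply Ekeland at the origin with parameter $\lambda\in(\varphi(0),\alpha_{k_0}R)$ --- a non-empty interval by (\ref{y21}) --- to produce $\bar x\in X$ with $\varphi(\bar x)\le\varphi(0)$, $d(\bar x,0)\le\lambda$, and
\[
\varphi(x)\ge\varphi(\bar x)-\frac{\varphi(0)}{\lambda}\,d(x,\bar x)\qquad\text{for all }x\in X.
\]
The inequality $\alpha_{k_0}\|\bar x\|_{k_0}\le d(\bar x,0)\le\lambda<\alpha_{k_0}R$ immediately yields $\|\bar x\|_{k_0}<R$, which is (\ref{t220}) and places $\bar x$ in the open region where hypotheses (1)--(5) are available.

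To establish $F(\bar x)=\bar y$ I argue by contradiction: assume $w:=F(\bar x)-\bar y\neq 0$. Since $Y$ is standard, choose a controlled sequence $v_n\to w$ in $Y$ with $\|v_n\|_k\le c_3(w)\|w\|_k$ and $\|v_n\|_k\le c_0(v_n)^k$, and put $u_n:=-L(\bar x)v_n\in X$. Bound (5) then gives $\|u_n\|_k\le m_k'\,c_0(v_n)^{k+d_2}$, so (\ref{y20}) combined with hypothesis (3) makes every series in what follows absolutely convergent. Inserting $x=\bar x+tu_n$ in the Ekeland inequality, dividing by $t>0$ and letting $t\to 0^+$, Lemma \ref{l2} applied termwise together with dominated convergence produces
\[
\sum_{k\ge 0}\beta_k\,\langle y^*_{k,n},\,DF(\bar x)u_n\rangle\;\ge\;-\frac{\varphi(0)}{\lambda}\sum_{k\ge 0}\alpha_k\|u_n\|_k
\]
for some $y^*_{k,n}\in N(w)\subset Y_k^*$. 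Because $DF(\bar x)L(\bar x)=I_Y$ the left-hand summand equals $-\beta_k\langle y^*_{k,n},v_n\rangle$. Sending $n\to\infty$, dominated convergence (majorant $\beta_k c_3(w)\|w\|_k$, summable because $\varphi(\bar x)<\infty$) gives $\sum_k\beta_k\langle y^*_{k,n},v_n\rangle\to\varphi(\bar x)$; on the right, $u_n\to-L(\bar x)w$ in every $X_k$ and the telescoping identity $\alpha_k m_k'=\beta_{k+d_2}$ yields $\sum_k\alpha_k\|u_n\|_k\to\sum_k\alpha_k\|L(\bar x)w\|_k\le\sum_{k\ge d_2}\beta_k\|w\|_k\le\varphi(\bar x)$. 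Combining, $\varphi(\bar x)\le(\varphi(0)/\lambda)\,\varphi(\bar x)$ with $\varphi(0)/\lambda<1$, contradicting $\varphi(\bar x)>0$.

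The main obstacle lies in the loss-of-derivatives interaction between conditions (3) and (5): the natural test direction $-L(\bar x)w$ belongs to $X$ but has no a priori exponential control on the norms $\|\cdot\|_{k+d_1}$ that appear on the right of condition (3), so termwise differentiation of $\varphi$ along this direction is not directly legal. Standardness of $Y$ is introduced precisely to supply controlled approximants $v_n$ for which (\ref{y20}) takes effect, dominating the relevant sums and legitimising both the right-derivative computation and the passage to the limit $n\to\infty$. Everything else --- completeness of $(X,d)$, lower semicontinuity of $\varphi$, confinement of $\bar x$ in $B_X(k_0,R)$, and the slack between $\lambda$ and (\ref{y21}) --- is routine Ekeland bookkeeping.
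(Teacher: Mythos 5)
Your proposal is correct and takes essentially the same route as the paper: Ekeland's variational principle applied to $\varphi(x)=\sum_k\beta_k\|F(x)-\bar y\|_k$ on $(X,d)$ with the coupled weights $\alpha_k=\beta_{k+d_2}/m_k'$, controlled approximants $u_n=-L(\bar x)v_n$ furnished by standardness of $Y$ to justify dominated convergence twice (first in $t\to 0^+$ via Lemma \ref{l2} and the Gronwall consequence of condition~3, then in $n\to\infty$), and the closing contradiction $\varphi(\bar x)\le(\varphi(0)/\lambda)\,\varphi(\bar x)$ with $\varphi(0)/\lambda<1$. The only cosmetic deviations are taking the truncation radius $r>R$ and carrying an explicit Ekeland parameter $\lambda\in(\varphi(0),\alpha_{k_0}R)$ in place of the paper's auxiliary $R'$ and the quantity $A=f(0)/(R'\alpha_{k_0})$.
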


The proof is given in the next section.\ We will now derive the consequences.
Before we do that, note that Condition 3 is equivalent to the following,
seemingly more general, one:%
\begin{equation}
\forall k,\ \ \ \left\Vert DF\left(  x\right)  u\right\Vert _{k}\leq
c_{1}^{\prime}\left(  u\right)  \left(  m_{k}\left\Vert u\right\Vert
_{k+d_{1}}+\left\Vert F\left(  x\right)  \right\Vert _{k}+1\right)
\label{q24}%
\end{equation}
Indeed, Condition 3 clearly implies (\ref{q24}) with\ $c_{1}^{\prime}\left(
u\right)  =c_{1}\left(  u\right)  $. Conversely, if (\ref{q24}) holds,  for $u\neq 0$ we set:%
\[
c_{1}\left(  u\right)  =c_{1}^{\prime}\left(  u\right)  \left(  1+\frac
{1}{m_{0}\left\Vert u\right\Vert_{0}}\right)
\]
and then:%
\[
c_{1} \left(  u\right)  \left(  m_{k}\left\Vert u\right\Vert
_{k+d_{1}}+\left\Vert F\left(  x\right)  \right\Vert _{k}\right)  \geq
c_{1}^{\prime}\left(  u\right)  \left(  m_{k}\left\Vert u\right\Vert _{k+d_{1}%
}+\left\Vert F\left(  x\right)  \right\Vert _{k}+1\right)
\]
so Condition 3 holds as well.

\begin{corollary}
[Local Surjection]\label{cor1}Let $X=\cap_{k\geq0}X_{k}$ and $Y=\cap_{k\geq
0}Y_{k}$ be graded Fr\'{e}chet spaces, with $Y$ standard, and let
$F:X\rightarrow Y$ satisfy conditions 1 to 5. Suppose:%
\begin{equation}
\left\Vert y\right\Vert _{k_{0}+d_{2}}<\frac{R}{m_{k_{0}}^{\prime}}
\label{ek12}%
\end{equation}

Then, for every $\mu>m_{k_{0}}^{\prime}$, there is some $x\in B_{X}\left(
k_{0},R\right)  $ such that:%
\begin{equation}
\left\Vert x\right\Vert _{k_{0}} \leq \mu\left\Vert y\right\Vert _{k_{0}+d_{2}}
\label{ek13}%
\end{equation}
and:%
\[
F\left(  x\right)  =y
\]

\end{corollary}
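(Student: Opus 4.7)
The plan is to deduce Corollary~\ref{cor1} from Theorem~\ref{thm2} by (i) shrinking the radius from $R$ to $R':=\mu\|y\|_{k_0+d_2}$, so that the conclusion $\|x\|_{k_0}<R'$ of the theorem immediately yields the quantitative bound (\ref{ek13}), and (ii) tailoring the weight sequence $\beta_k$ to the specific right-hand side $y$, concentrating it at the index $k_0+d_2$. The case $y=0$ is trivial (take $x=0$), so assume $\|y\|_{k_0+d_2}>0$. If $\mu\|y\|_{k_0+d_2}\ge R$, replace $\mu$ by some $\mu'\in(m_{k_0}',\mu)$ with $\mu'\|y\|_{k_0+d_2}<R$, which exists because $m_{k_0}'\|y\|_{k_0+d_2}<R$ by (\ref{ek12}); a solution for $\mu'$ works a fortiori for $\mu$. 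Hence without loss of generality $R':=\mu\|y\|_{k_0+d_2}<R$, so that $B_X(k_0,R')\subset B_X(k_0,R)$ and conditions 1--5 still hold on the smaller ball.

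Next I build $\beta$. Fix any positive sequence $\alpha_k$ of unbounded support decaying fast enough that
\[
\sum_{k\ge 0}\alpha_k\,m_k m_{k+d_1}'\,n^k<\infty\quad\forall n\in\mathbb{N}\qquad\text{and}\qquad \sum_{k\ge 0}\alpha_k\|y\|_k<\infty;
\]
the explicit choice $\alpha_k=\bigl(k!\,2^k\,(m_k m_{k+d_1}'+1)(\|y\|_k+1)\bigr)^{-1}$ works, since the first sum is then dominated by $\sum_k n^k/(k!\,2^k)=e^{n/2}$ and the second by $\sum_k 1/(k!\,2^k)$. For $\epsilon\in(0,1]$, set $\beta_{k_0+d_2}^{(\epsilon)}:=\alpha_{k_0+d_2}$ and $\beta_k^{(\epsilon)}:=\epsilon\alpha_k$ for $k\ne k_0+d_2$. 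The sequence $\beta^{(\epsilon)}$ still has unbounded support, and being dominated by $\alpha$ it still satisfies (\ref{y20}). Condition (\ref{y21}), with $(\beta,R)$ replaced by $(\beta^{(\epsilon)},R')$, rearranges to
\[
\epsilon\sum_{k\ne k_0+d_2}\alpha_k\|y\|_k<\alpha_{k_0+d_2}\|y\|_{k_0+d_2}\left(\frac{\mu}{m_{k_0}'}-1\right),
\]
whose right-hand side is strictly positive because $\mu>m_{k_0}'$; so it holds for all sufficiently small $\epsilon>0$.

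Applying Theorem~\ref{thm2} with $\beta=\beta^{(\epsilon)}$ on the ball $B_X(k_0,R')$ then produces an $x\in X$ with $F(x)=y$ and $\|x\|_{k_0}<R'=\mu\|y\|_{k_0+d_2}<R$, which is exactly the desired conclusion (in fact with strict inequality). The only non-routine step is the construction in the previous paragraph: condition (\ref{y20}) is a summability requirement intrinsic to $F$ and independent of $y$, while (\ref{y21}) is a sharp threshold condition depending on the size of $y$ at index $k_0+d_2$, and the scalar $\epsilon$ is precisely the tool that decouples the two constraints so that both can be met simultaneously.
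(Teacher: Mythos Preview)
Your proof is correct and follows essentially the same approach as the paper: shrink the radius to $R'$ close to $\mu\|y\|_{k_0+d_2}$, construct a weight sequence $\beta$ concentrated at the index $k_0+d_2$ (you use a small parameter $\epsilon$ on the off-index terms, the paper uses a small parameter $h$ on the tail $k>k_0+d_2$ and zeros below), verify (\ref{y20}) and (\ref{y21}), then invoke Theorem~\ref{thm2}. Your explicit handling of the trivial case $y=0$ and the reduction to $\mu\|y\|_{k_0+d_2}<R$ are slightly more careful than the paper's presentation, but the argument is the same.
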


\begin{proof}
Given $\mu>m_{k_{0}}^{\prime}$, take $R^{\prime}<R$ such that $m_{k_{0}%
}^{\prime}\left\Vert y\right\Vert _{k_{0}+d_{2}}<R^{\prime}<\mu\left\Vert
y\right\Vert _{k_{0}+d_{2}}$ and choose $h>0$ so small that:%
\begin{equation}
\left\Vert y\right\Vert _{k_{0}+d_{2}}+h\sum_{k=0}^{\infty}k^{-k}%
<\frac{R^{\prime}}{m_{k_{0}}^{\prime}} \label{ek100}%
\end{equation}

Define a sequence $\beta_{k}$ by:%
\begin{equation}
\beta_{k}=\left\{
\begin{array}
[c]{c}%
0\text{ for }k<k_{0}+d_{2}\\
1\text{ for }k=k_{0}+d_{2}\\
hk^{-k}\left[  \max\left\{  \left\Vert y\right\Vert _{k},m_{k}m_{k+d_{1}%
}^{\prime}\right\}  \right]  ^{-1}\text{ for }k>k_{0}+d_{2}%
\end{array}
\right.  \label{ek25}%
\end{equation}

Then (\ref{y20}) is satisfied. On the other hand:
\[
\sum_{k=0}^{\infty}\beta_{k}\left\Vert y\right\Vert _{k}\leq\left\Vert
y\right\Vert _{k_{0}+d_{2}}+h\sum k^{-k}%
\]
and using (\ref{ek100}) we find that:%
\begin{equation}
\sum_{k=0}^{\infty}\beta_{k}\left\Vert y\right\Vert _{k}<\frac{1}{m_{k_{0}%
}^{\prime}}R^{\prime}<\infty\label{ek26}%
\end{equation}

We now apply Theorem \ref{thm2} with $R^{\prime}$ instead of $R$, and we find
some $x\in X$ with $F\left(  x\right)  =y$ and $\left\Vert x\right\Vert
_{k_{0}}\leq R^{\prime}$. The result follows.
\end{proof}

Consider the two balls:%
\begin{align*}
B_{X}\left(  k_{0},R\right)   &  =\left\{  x\ |\ \left\Vert x\right\Vert
_{k_{0}}<R\right\} \\
B_{Y}\left(  k_{0}+d_{2},\frac{R}{m_{k_{0}}^{\prime}}\right)   &  =\left\{
y\ |\ \left\Vert y\right\Vert _{k_{0}+d_{2}}<\frac{R}{m_{k_{0}}^{\prime}%
}\right\}
\end{align*}

The preceding Corollary tells us that $F$ maps the first ball onto the
second. The inverse map
\[
F^{-1}:B_{Y}\left(  k_{0}+d_{2},\frac{R}{m_{k_{0}}^{\prime}}\right)
\mathcal{\rightarrow}B_{X}\left(  k_{0},R\right)
\]
is multivalued:%
\begin{equation}
F^{-1}\left(  y\right)  =\left\{  x\in B_{X}\left(  k_{0},R\right)
\ |\ F\left(  x\right)  =y\right\}  \label{ek50}%
\end{equation}
and has non-empty values: $F^{-1}\left(  y\right)  \neq\varnothing$ for every
$y$. The following result shows that it satisfies a Lipschitz condition.

\begin{corollary}
[Lipschitz inverse]\label{cor5}$\ $For every $y_{0}$ and $y_{1}$ in
$B_{Y}\left(  k_{0}+d_{2},R\left(  m_{k_{0}}^{\prime}\right)  ^{-1}\right)  $,
every $x_{0}\in F^{-1}\left(  y_{0}\right)  $, and every $\mu>m_{k_{0}%
}^{\prime}$, we have:%
\begin{align*}
\inf\left\{  \left\Vert x_{0}-x_{1}\right\Vert _{k_{0}}\ |\ \ x_{1}\in
F^{-1}\left(  y_{1}\right)  \right\}   &  =\inf\left\{  \left\Vert x_{0}%
-x_{1}\right\Vert _{k_{0}}\ |\ F\left(  x_{1}\right)  =y_{1}\right\} \\
&  \leq\mu\left\Vert y_{0}-y_{1}\right\Vert _{k_{0}+d_{2}}%
\end{align*}

\end{corollary}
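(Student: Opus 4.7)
The plan is to reduce the Lipschitz bound to Corollary \ref{cor1} by shifting the base point from the origin to $x_0$. Given $x_0 \in F^{-1}(y_0)$, set
\[
G(z) := F(x_0 + z) - y_0, \qquad z \in X,
\]
so that $G(0) = 0$. Any solution $z$ of $G(z) = y_1 - y_0$ produces, via $x_1 := x_0 + z$, an element of $F^{-1}(y_1)$ with $\|x_0 - x_1\|_{k_0} = \|z\|_{k_0}$; the bound supplied by Corollary \ref{cor1} applied to $G$ and $\bar y := y_1 - y_0$ is then precisely the desired Lipschitz estimate.

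First I would verify that $G$ inherits hypotheses 1--5 of Theorem \ref{thm2} on the ball $B_X(k_0, R - \|x_0\|_{k_0})$. Items 1 and 2 are clear, and $DG(z) = DF(x_0 + z)$ together with $L_G(z) := L(x_0 + z)$ gives a right inverse satisfying 4 and 5 with the same constants $m_k'$. For item 3, the triangle inequality $\|F(x_0+z)\|_k \leq \|G(z)\|_k + \|y_0\|_k$ yields
\[
\|DG(z)u\|_k \;\leq\; c_1(u)\bigl(m_k\|u\|_{k+d_1} + \|G(z)\|_k + \|y_0\|_k\bigr),
\]
which is of the generalized form (\ref{q24}), the additive $\|y_0\|_k$ playing the same role as the ``$+1$'' (the constants now being allowed to depend on the fixed datum $y_0$).

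Having done this, I would apply Corollary \ref{cor1} to $G$ on $B_X(k_0, R - \|x_0\|_{k_0})$ with target $\bar y = y_1 - y_0$. When $\|y_1-y_0\|_{k_0+d_2} < (R - \|x_0\|_{k_0})/m_{k_0}'$, the corollary produces, for every $\mu > m_{k_0}'$, a point $z$ with $G(z) = y_1 - y_0$ and $\|z\|_{k_0} \leq \mu\|y_1-y_0\|_{k_0+d_2}$; setting $x_1 := x_0 + z$ proves the claim. In the remaining range of $\|y_1-y_0\|_{k_0+d_2}$ the estimate is to be established either by the trivial bound $\|x_0 - x_1\|_{k_0} < 2R$ combined with the assumption that $\mu \|y_1-y_0\|_{k_0+d_2}$ is correspondingly large, or by replaying the Ekeland argument of Theorem \ref{thm2} directly starting from $x_0$ rather than the origin.

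The main obstacle is the verification of hypothesis 3 for the translated map $G$: the extra term $\|y_0\|_k$ is genuinely $k$-dependent and cannot be absorbed into a purely $u$-dependent constant by the short equivalence argument that turns (\ref{q24}) back into hypothesis 3. The resolution is that, in the proof of Theorem \ref{thm2}, an additive $k$-dependent term on the right of hypothesis 3 only enters the estimates through expressions of the form $\sum \beta_k \|y_0\|_k$, which converge thanks to (\ref{y20})--(\ref{y21}); consequently, Corollary \ref{cor1} holds for $G$ with constants that depend on $y_0$, and the Lipschitz estimate follows.
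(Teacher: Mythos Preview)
Your translation $G(z)=F(x_0+z)-y_0$ and single application of Corollary~\ref{cor1} coincide with the paper's first move, and your observation about Condition~3 is well taken: the paper simply asserts that the translated map satisfies Conditions 1--5, while you correctly point out that the extra term $\|y_0\|_k$ on the right of Condition~3 is $k$-dependent and cannot be absorbed by the equivalence (\ref{q24}). Your proposed fix---carry $\|y_0\|_k$ through the proof of Theorem~\ref{thm2} and include it in the choice of $\beta_k$ in Corollary~\ref{cor1}---is the right idea.

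There is, however, a genuine gap in your handling of the ``remaining range''. After translation, $G$ satisfies the hypotheses only on $B_X(k_0,R-\|x_0\|_{k_0})$, so your one-step argument requires
\[
\|y_1-y_0\|_{k_0+d_2}<\frac{R-\|x_0\|_{k_0}}{m'_{k_0}}.
\]
Since $x_0$ is an \emph{arbitrary} element of $F^{-1}(y_0)\subset B_X(k_0,R)$, the number $R-\|x_0\|_{k_0}$ can be arbitrarily small, and the above inequality can fail even for $y_1$ very close to $y_0$. In that regime your ``trivial bound'' does not rescue you: from $\|y_1-y_0\|_{k_0+d_2}\geq (R-\|x_0\|_{k_0})/m'_{k_0}$ one only gets $\mu\|y_1-y_0\|_{k_0+d_2}>R-\|x_0\|_{k_0}$, which can be much smaller than $2R$; and ``replaying Ekeland from $x_0$'' is exactly what the one-step argument already does, so it suffers from the same range restriction.

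The paper closes this gap with a chain argument: it connects $y_0$ to $y_1$ by finitely many aligned intermediate points $y'_n$ with $\|y'_n-y'_{n+1}\|_{k_0+d_2}$ small, and builds $x_{n+1}\in F^{-1}(y'_{n+1})$ from $x_n$ by applying Corollary~\ref{cor1} to the map translated to $x_n$; summing the local estimates along the aligned chain gives $\|x_0-x_N\|_{k_0}\leq\mu\|y_0-y_1\|_{k_0+d_2}$. This subdivision is the missing ingredient in your proposal.
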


\begin{proof}
Take some $R^{\prime}<R$ with $\max\left\{  \left\Vert y_{0}\right\Vert
_{k_{0}+d_{2}},\left\Vert y_{1}\right\Vert _{k_{0}+d_{2}}\right\}  <R^{\prime
}\left(  m_{k_{0}}^{\prime}\right)  ^{-1}$ and consider the line segment
$y_{t}=y_{0}+t\left(  y_{1}-y_{0}\right)  $, $0\leq t\leq1$, joining $y_{0}$
to $y_{1}$. We have $\left\Vert y_{t}\right\Vert <R^{\prime}\left(  m_{k_{0}%
}^{\prime}\right)  ^{-1}$ for every $t$, so that, by Corollary \ref{cor1},
there exists some $x_{t}\in F^{-1}\left(  y_{t}\right)  $ with $\left\Vert
x_{t}\right\Vert <R^{\prime}$. The function $F_{t}\left(  x\right)  =F\left(
x+x_{t}\right)  -y_{t}$ then satisfies Conditions 1 to 5 with $R$ replaced by
$\rho=R-R^{\prime}$.

Pick some $x_{0}\in F^{-1}\left(  y_{0}\right)  $. By Corollary\ \ref{cor1}
applied to $F_{0}$ we find that, for every $y$ such that $\left\Vert
y-y_{0}\right\Vert _{k_{0}+d_{2}}\leq\rho\left(  m_{k_{0}}^{\prime}\right)
^{-1}$ we have some $x\in F^{-1}\left(  y\right)  $ with:
\[
\,\left\Vert x_{0}-x\right\Vert _{k_{0}}\leq\mu\left\Vert y_{0}-y\right\Vert
_{k_{0}}%
\]

We can connect $y_{0}$ and $y_{1}$ by a finite chain $y_{0}^{\prime}%
=y_{0},\ y_{2}^{\prime},...y_{N}^{\prime}=y_{1}$ of aligned points, such that
the distance between $y_{n}^{\prime}$ and $y_{n+1}^{\prime}$ is always less
than $\rho\left(  m_{k_{0}}^{\prime}\right)  ^{-1}$, and for each
$y_{n}^{\prime}$ choose some $x_{n}\in F^{-1}\left(  y_{n}^{\prime}\right)  $
such that:
\[
\left\Vert x_{n}-x_{n+1}\right\Vert _{k_{0}}\leq\mu\left\Vert y_{n}^{\prime
}-y_{n+1}^{\prime}\right\Vert _{k_{0}+d_{2}}%
\]

Summing up:%
\[
\left\Vert x_{0}-x_{1}\right\Vert _{k_{0}}\leq\mu\sum_{n=0}^{N}\left\Vert
y_{n}^{\prime}-y_{n+1}^{\prime}\right\Vert _{k_{0}+d_{2}}=\mu\left\Vert
y_{1}-y_{0}\right\Vert _{k_{0}+d_{2}}%
\]

\end{proof}

Note that we are not claiming that the multivalued map $F^{-1}$ has a
Lipschitz section over $B_{Y}\left(  k_{0}+d_{2},R\left(  m_{k_{0}}^{\prime
}\right)  ^{-1}\right)  $, or even a continous one.

As a consequence of Corollary \ref{cor5}, we can solve the equation $F\left(
x\right)  =y$ when the right-hand side no longer is in $Y$, but in some of the
$Y_{k}$, with $k\geq k_{0}+d_{2}\,.$

\begin{corollary}
[Finite regularity]\label{cor6} Suppose $F$ extends to a continuous map
$\bar{F}:X_{k_{0}}\rightarrow Y_{k_{0}-d_{1}}$. Take some $y\in Y_{k_{0}%
+d_{2}}$ with $\left\Vert y\right\Vert _{k_{0}+d_{2}}<R\left(  m_{k_{0}%
}^{\prime}\right)  ^{-1}$. Then there is some $x\in X_{k_{0}}$ such that
$\left\Vert x\right\Vert _{k_{0}}<R$ and $\bar{F}\left(  x\right)  =y\,.$
\end{corollary}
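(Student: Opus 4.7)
The strategy is to reduce Corollary \ref{cor6} to Corollaries \ref{cor1} and \ref{cor5} by approximating $y\in Y_{k_{0}+d_{2}}$ from within $Y=\cap_{k}Y_{k}$, which is dense in $Y_{k_{0}+d_{2}}$ by definition of the completion. I would first fix $R'<R$ and $\mu>m_{k_{0}}^{\prime}$ with $\mu\left\Vert y\right\Vert _{k_{0}+d_{2}}<R'$, then select a sequence $(y_{n})_{n\geq0}\subset Y$ converging to $y$ in $\left\Vert \cdot\right\Vert _{k_{0}+d_{2}}$ so rapidly that $\left\Vert y_{n}\right\Vert _{k_{0}+d_{2}}<R'/\mu$ for every $n$ and $\mu\sum_{n\geq0}\left\Vert y_{n}-y_{n+1}\right\Vert _{k_{0}+d_{2}}<(R-R')/2$.

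Next, Corollary \ref{cor1} produces $x_{0}\in X$ with $F(x_{0})=y_{0}$ and $\left\Vert x_{0}\right\Vert _{k_{0}}\leq\mu\left\Vert y_{0}\right\Vert _{k_{0}+d_{2}}<R'$. I then iterate Corollary \ref{cor5}: given $x_{n}\in F^{-1}(y_{n})\subset B_{X}(k_{0},R)$, that corollary supplies some $x_{n+1}\in F^{-1}(y_{n+1})$ with
\[
\left\Vert x_{n}-x_{n+1}\right\Vert _{k_{0}}\leq\mu\left\Vert y_{n}-y_{n+1}\right\Vert _{k_{0}+d_{2}}+2^{-n-2}(R-R'),
\]
the last summand absorbing the infimum on the right-hand side of Corollary \ref{cor5}. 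Telescoping gives
\[
\left\Vert x_{n}\right\Vert _{k_{0}}\leq\left\Vert x_{0}\right\Vert _{k_{0}}+\sum_{j<n}\left\Vert x_{j}-x_{j+1}\right\Vert _{k_{0}}<R'+\tfrac{1}{2}(R-R')+\tfrac{1}{2}(R-R')=R,
\]
so every $x_{n}$ lies in $B_{X}(k_{0},R)$ and the construction persists for all $n$. The very same estimate shows that $(x_{n})$ is Cauchy in the Banach space $(X_{k_{0}},\left\Vert \cdot\right\Vert _{k_{0}})$, hence converges to some $\bar{x}\in X_{k_{0}}$ with $\left\Vert \bar{x}\right\Vert _{k_{0}}<R$.

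To conclude, since $\bar{F}$ extends $F$ and is continuous from $X_{k_{0}}$ into $Y_{k_{0}-d_{1}}$, passing to the limit along $x_{n}\to\bar{x}$ in $X_{k_{0}}$ gives $\bar{F}(\bar{x})=\lim_{n}F(x_{n})=\lim_{n}y_{n}$ in $Y_{k_{0}-d_{1}}$. The continuous embedding $Y_{k_{0}+d_{2}}\hookrightarrow Y_{k_{0}-d_{1}}$ (coming from $\left\Vert \cdot\right\Vert _{k_{0}-d_{1}}\leq\left\Vert \cdot\right\Vert _{k_{0}+d_{2}}$ on $Y$ and passage to completions) ensures that $y_{n}\to y$ also in $Y_{k_{0}-d_{1}}$, and so $\bar{F}(\bar{x})=y$, as required.

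The only real obstacle in this argument is the bookkeeping needed to keep each iterate $x_{n}$ strictly inside $B_{X}(k_{0},R)$ so that Corollary \ref{cor5} remains applicable at every step; this is handled by leaving a buffer of width $R-R'$ between the radius used for the initial surjection and the radius of the ambient ball, and by demanding that the approximating sequence $(y_{n})$ converge to $y$ quickly enough that the cumulative Lipschitz drift fits inside this buffer. Beyond that, everything is a routine limiting argument in the Banach space $X_{k_{0}}$.
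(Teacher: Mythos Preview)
Your proof is correct and follows essentially the same approach as the paper: approximate $y$ by a sequence $y_n\in Y$, use Corollaries~\ref{cor1} and~\ref{cor5} to build a sequence $x_n\in F^{-1}(y_n)$ that is Cauchy in $X_{k_0}$, and pass to the limit using the assumed continuity of $\bar F$. Your version is in fact more scrupulous than the paper's own argument, which silently treats the infimum in Corollary~\ref{cor5} as attained and does not explicitly track that the iterates $x_n$ remain inside $B_X(k_0,R)$; the buffer $R-R'$ and the summable correction terms $2^{-n-2}(R-R')$ you introduce are exactly what is needed to make those steps airtight.
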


\begin{proof}
Let $y_{n}\in Y$ be such that $\left\Vert y_{n}-y\right\Vert _{k_{0}+d}%
\leq2^{-n}$. By Corollary \ref{cor5}, we can find a sequence $x_{n}\in X$ such
that $F\left(  x_{n}\right)  =y_{n}$ and
\[
\left\Vert x_{n}-x_{n+1}\right\Vert _{k_{0}}\leq\mu\left\Vert y_{n}%
-y_{n+1}\right\Vert _{k_{0}+d}\leq\mu\,2^{-n}%
\]

So $\left\Vert x_{n}-x_{p}\right\Vert _{k_{0}}\leq\mu2^{-n+1}$ for $p>n$, and
the sequence $x_{n}$ is Cauchy in $X_{k_{0}}$. It follows that $x_{n}$
converges to some $x\in X_{k_{0}}$, with $\left\Vert x\right\Vert _{k_{0}}<R$,
and we get $F\left(  x\right)  =y$ by continuity.
\end{proof}

Let us sum up our results in a single statement:

\begin{theorem}
[Inverse Function Theorem]\label{Thm10} Let $X=\cap_{k\geq0}X_{k}$ and
$Y=\cap_{k\geq0}Y_{k}$ be graded Fr\'{e}chet spaces, with $Y$ standard, and
let $F$ be a map from $X$ to $Y$. Assume there exist some integer $k_{0}$,
some $R>0$ (possibly equal to $+\infty$), integers $d_{1},\ d_{2}$, and
non-decreasing sequences $m_{k}>0,\ m_{k}^{\prime}>0$ such that, for
$\left\Vert x\right\Vert _{k_{0}}<R$, we have:

\begin{enumerate}
\item $F\left(  0\right)  =0$

\item $F$ is continuous, and G\^{a}teaux-differentiable with derivative
$DF\left(  x\right)  $

\item For every $u\in X$ there is a number $c_{1}\left(  u\right)  $ such
that:
\[
\forall k,\text{\ \ }\left\Vert DF\left(  x\right)  u\right\Vert _{k}\leq
c_{1}\left(  u\right)  \left(  m_{k}\left\Vert u\right\Vert _{k+d_{1}%
}+\left\Vert F\left(  x\right)  \right\Vert _{k}\right)
\]

\item There exists a linear map $L\left(  x\right)  :Y\longrightarrow X$ such that
$\ DF\left(  x\right)  L\left(  x\right)  =I_{Y}$%
\[
\forall u\in X,\ \ DF\left(  x\right)  L\left(  x\right)  v=v
\]

\item For every $v\in Y$, we have:%
\[
\forall k\in\mathbb{N},\ \ \ \left\Vert L\left(  x\right)  v\right\Vert
_{k}\leq m_{k}^{\prime}\left\Vert v\right\Vert _{k+d_{2}}%
\]

\end{enumerate}

Then $F$ maps the ball $\bigl\{\left\Vert x\right\Vert _{k_{0}}<R\bigr\}$ in $X$ onto the
ball $\bigl\{\left\Vert y\right\Vert _{k_{0}+d_{2}}<R\left(  m_{k_{0}}^{\prime
}\right)  ^{-1}\bigr\}$ in $Y$, and for every $\mu>m_{k_{0}}^{\prime}$the inverse
$F^{-1}$ satisfies a Lipschitz condition:%
\[
\forall x_{1}\in F^{-1}\left(  y_{1}\right)  ,\ \ \inf\left\{  \left\Vert
x_{1}-x_{2}\right\Vert _{k_{0}}\ |\ x_{2}\in F^{-1}\left(  y_{2}\right)
\right\}  \leq \mu\left\Vert y_{1}-y_{2}\right\Vert _{k_{0}+d_{2}}%
\]

If $F$ extends to a continuous map $\bar{F}:X_{k_{0}}\rightarrow
Y_{k_{0}-d_{1}}$, then $\bar{F}$ maps the ball $\bigl\{\left\Vert x\right\Vert
_{k_{0}}<R\bigr\}$ in $X_{k_{0}}$ onto the ball $\bigl\{\left\Vert y\right\Vert
_{k_{0}+d_{2}}<R\left(  m_{k_{0}}^{\prime}\right)  ^{-1}\bigr\}$ in $Y_{k_{0}+d_{2}}%
$, and the inverse $\bar{F}^{-1}$ satisfies the same Lipschitz condition.
\end{theorem}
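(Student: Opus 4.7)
The statement of Theorem \ref{Thm10} is a packaging of the three corollaries already established, together with one extension to the completed spaces $X_{k_0}$ and $Y_{k_0+d_2}$ that is not made explicit elsewhere. The plan is to identify each clause with the appropriate corollary and to supply the single remaining argument, namely the Lipschitz property of $\bar{F}^{-1}$.

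For the part concerning $F:X\to Y$, nothing new is required. The surjection of $\{\|x\|_{k_0}<R\}\subset X$ onto $\{\|y\|_{k_0+d_2}<R/m_{k_0}'\}\subset Y$ is Corollary \ref{cor1} applied to every admissible right-hand side, and the Lipschitz bound on the multivalued inverse $F^{-1}$ is Corollary \ref{cor5} verbatim. For the extension $\bar{F}:X_{k_0}\to Y_{k_0-d_1}$, surjectivity of $\bar{F}$ onto the ball in $Y_{k_0+d_2}$ is Corollary \ref{cor6}.

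What remains is the Lipschitz estimate for $\bar{F}^{-1}$. Fix $y_1,y_2\in Y_{k_0+d_2}$ with norms $<R/m_{k_0}'$, $x_1\in\bar{F}^{-1}(y_1)$, and $\mu>m_{k_0}'$. Shrink to some $R'<R$ controlling all iterates, and pick $\mu'\in(m_{k_0}',\mu)$. Choose approximating sequences $y_1^n\to y_1$ and $y_2^n\to y_2$ in $Y$ in the $(k_0+d_2)$-norm, with $y_i^0$ close to $y_i$ and $\sum_n\|y_i^n-y_i^{n+1}\|_{k_0+d_2}$ summing geometrically. By iterated application of Corollary \ref{cor5} to $F$, construct (as in the proof of Corollary \ref{cor6}) a sequence $x_1^n\in X$ with $F(x_1^n)=y_1^n$ which is Cauchy in $X_{k_0}$ and converges to $x_1$. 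Next, apply Corollary \ref{cor5} at step $0$ with base point $x_1^0$ to produce $x_2^0\in F^{-1}(y_2^0)$ with $\|x_1^0-x_2^0\|_{k_0}\leq\mu'\|y_1^0-y_2^0\|_{k_0+d_2}$, and inductively produce $x_2^{n+1}\in F^{-1}(y_2^{n+1})$ with base point $x_2^n$ so that $\|x_2^n-x_2^{n+1}\|_{k_0}\leq\mu'\|y_2^n-y_2^{n+1}\|_{k_0+d_2}$. The sequence $x_2^n$ is Cauchy in $X_{k_0}$, and its limit $x_2\in X_{k_0}$ satisfies $\bar{F}(x_2)=y_2$ by continuity of $\bar{F}:X_{k_0}\to Y_{k_0-d_1}$.

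The main obstacle is the bookkeeping in the triangle inequality along the chain $x_1,x_1^0,x_2^0,x_2^1,\dots,x_2$: the Lipschitz bound is only available between consecutive terms, so summing introduces an error of order $\mu'\|y_1^0-y_1\|_{k_0+d_2}+\mu'\|y_2^0-y_2\|_{k_0+d_2}$ that must be absorbed. This is handled exactly as in the chain argument of Corollary \ref{cor5}: choose $y_i^0$ sharp enough at the outset so that this extra error is below any prescribed $\varepsilon>0$, yielding $\|x_1-x_2\|_{k_0}\leq\mu'\|y_1-y_2\|_{k_0+d_2}+\varepsilon$. Taking the infimum first over $x_2\in\bar{F}^{-1}(y_2)$ and then letting $\varepsilon\to 0$, and using $\mu'<\mu$, gives the desired Lipschitz bound. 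The auxiliary fact $\|x_2\|_{k_0}<R$ follows because all iterates are constructed inside the ball of radius $R'<R$.
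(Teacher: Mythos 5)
Your structural reading of Theorem \ref{Thm10} is exactly right and matches the paper's intent: the surjectivity statement for $F$ is Corollary \ref{cor1}, the Lipschitz bound on $F^{-1}$ is Corollary \ref{cor5}, and the surjectivity of $\bar{F}$ onto $\bigl\{\|y\|_{k_0+d_2}<R(m_{k_0}')^{-1}\bigr\}$ is Corollary \ref{cor6}. The paper introduces Theorem \ref{Thm10} with ``let us sum up our results in a single statement'' and gives no proof; in particular the Lipschitz property of $\bar{F}^{-1}$ is asserted without argument, so you have correctly singled out the one clause that actually needs justification.

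However, your proof of that clause has a real gap. You write that ``by iterated application of Corollary \ref{cor5} \dots construct a sequence $x_1^n\in X$ with $F(x_1^n)=y_1^n$ which is Cauchy in $X_{k_0}$ and converges to $x_1$,'' but nothing forces the chain to converge to the prescribed $x_1$. The construction in Corollary \ref{cor6} starts from an arbitrary $x_1^0\in F^{-1}(y_1^0)$ and produces some limit $x_1^*\in\bar{F}^{-1}(y_1)$; different choices of $x_1^0$ (and of the chain at each stage) produce different limits, and there is no mechanism to ensure the given $x_1$ is among them. The natural repair --- approximate $x_1$ by $\xi_n\in X$ with $\|\xi_n-x_1\|_{k_0}\to 0$ and set $y_1^n:=F(\xi_n)$ --- fails because of the loss of derivatives: continuity of $\bar{F}:X_{k_0}\to Y_{k_0-d_1}$ only yields $F(\xi_n)\to y_1$ in the $(k_0-d_1)$-norm, whereas the chain estimate in Corollary \ref{cor5} requires convergence in the strictly stronger $(k_0+d_2)$-norm. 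The other standard route, shifting to $G(\xi)=\bar{F}(\xi+x_1)-y_1$ and applying Theorem \ref{thm2} to $G$, is also blocked: with $x_1\in X_{k_0}\setminus X$ the point $\xi+x_1$ leaves $X$, so Conditions $1$--$5$, which are hypotheses on $F:X\to Y$, do not transfer. Your argument thus establishes the Lipschitz bound only for the constructed limit $x_1^*$, not for an arbitrary $x_1\in\bar{F}^{-1}(y_1)$ as the statement requires. This is the point that needs a new idea (or, failing that, a weakening of the claim to the existence of \emph{some} pair $(x_1,x_2)\in\bar F^{-1}(y_1)\times\bar F^{-1}(y_2)$ satisfying the estimate, which is what your chain construction actually delivers).
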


We conclude by rephrasing Theorem \ref{Thm10} as an implicit function theorem.

\begin{theorem}
\label{cor3} Let $X=\cap_{k\geq0}X_{k}$ and $Y=\cap_{k\geq0}Y_{k}$ be graded
Fr\'{e}chet spaces, with $Y$ standard, and let $F\left(  \varepsilon,x\right)
=F_{0}\left(  x\right)  +\varepsilon F_{1}\left(  x\right)  $ be a map from
$X\times \mathbb{R}$ to $Y$. Assume there exist integers $k_{0},\ d_{1},\ d_{2}$,
sequences $m_{k}>0,\ m_{k}^{\prime}>0$, and numbers $R>0$\thinspace$,$
$\varepsilon_{0}>0$ such that, for every $\left(  x,\varepsilon\right)  $ such
that $\left\Vert x\right\Vert _{k_{0}}\leq R$ and $\left\vert \varepsilon
\right\vert <\varepsilon_{0}$, we have:

\begin{enumerate}
\item $F_{0}\left(  0\right) =0$ and $F_{1}\left(  0\right)  \neq0$

\item $F_{0}$ and $F_{1}$ are continuous, and G\^{a}teaux-differentiable

\item For every $u\in X$ there is a number $c_{1}\left(  u\right)  $ such
that:
\[
\forall k\geq0,\ \ \ \left\Vert DF\left(  \varepsilon,x\right)  u\right\Vert
_{k}\leq c_{1}\left(  u\right)  \left(  m_{k}\left\Vert u\right\Vert
_{k+d_{1}}+\left\Vert F\left(  \varepsilon,x\right)  \right\Vert _{k}\right)
\]

\item There exists a linear map $L\left(  \varepsilon,x\right)  :Y\longrightarrow X$
such that:%
\[
DF\left(  \varepsilon,x\right)  L\left(  x\right)  =I_{Y}%
\]

\item For every $v\in Y$, we have:%
\[
\forall k\geq0,\ \ \ \left\Vert L\left(  \varepsilon,x\right)  v\right\Vert
_{k}\leq m_{k}^{\prime}\left\Vert v\right\Vert _{k+d_{2}}%
\]

\end{enumerate}

Then, for every $\varepsilon$ such that:
\[
\left\vert \varepsilon\right\vert < \min\left\{\frac{R}{m_{k_{0}}^{\prime}}\left\Vert
F_{1}\left(  0\right)  \right\Vert _{k_{0}+d_{2}}^{-1} \,,\,\varepsilon_0\right\} \text{ \ }%
\]
and every $\mu>m_{k_{0}}^{\prime}$, there is an $x_{\varepsilon}$ such that:%
\[
F\left(  \varepsilon,x_{\varepsilon}\right)  =0
\]
and:%
\[
\left\Vert x_{\varepsilon}\right\Vert _{k_{0}}\leq \mu \,\vert \varepsilon \vert\,\left\Vert
F_{1}\left(  0\right)  \right\Vert _{k_{0}+d_{2}}%
\]

\end{theorem}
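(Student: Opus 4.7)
The plan is to reduce to the inverse function theorem (Corollary \ref{cor1}) via a translation. Fix $\varepsilon$ with $|\varepsilon| < \varepsilon_0$ and $|\varepsilon|\,\|F_1(0)\|_{k_0+d_2} < R/m_{k_0}'$, and define
\[
G_\varepsilon(x) := F(\varepsilon, x) - \varepsilon F_1(0),
\]
so that $G_\varepsilon(0) = 0$ and solving $F(\varepsilon, x_\varepsilon) = 0$ is equivalent to solving $G_\varepsilon(x_\varepsilon) = -\varepsilon F_1(0)$. My strategy is to apply Corollary \ref{cor1} to $G_\varepsilon$ with target $y := -\varepsilon F_1(0)$ on the ball $\|x\|_{k_0} < R$.

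Since $G_\varepsilon$ and $F(\varepsilon, \cdot)$ differ by a constant, $DG_\varepsilon(x) = DF(\varepsilon, x)$, so conditions 2, 4, 5 transfer immediately: the right inverse $L(\varepsilon, x)$ of $DG_\varepsilon(x)$ is the same as that of $DF(\varepsilon, x)$ and obeys the same bound with $m_k'$. The delicate point is condition 3. From the triangle inequality $\|F(\varepsilon, x)\|_k \leq \|G_\varepsilon(x)\|_k + \varepsilon_0\|F_1(0)\|_k$ one obtains
\[
\|DG_\varepsilon(x) u\|_k \leq c_1(u)\bigl(m_k\|u\|_{k+d_1} + \|G_\varepsilon(x)\|_k + \varepsilon_0\|F_1(0)\|_k\bigr).
\]
Because $\|F_1(0)\|_k$ may grow with $k$, this residual cannot be absorbed by the $+1$ trick behind (\ref{q24}). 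Instead, I enlarge the growth sequence: set $\hat m_k := m_k + \|F_1(0)\|_k$ and, for $u \neq 0$, $\hat c_1(u) := c_1(u)\max\bigl(1, \varepsilon_0/\|u\|_0\bigr)$. Using $\|u\|_{k+d_1} \geq \|u\|_0$, one checks that condition 3 holds for $G_\varepsilon$ with $\hat m_k$ and $\hat c_1$. Since Corollary \ref{cor1}'s conclusion depends only on $R$ and $m_{k_0}'$, and $m_k$ enters only through the summability condition (\ref{y20}) (which is re-satisfied by choosing $\beta_k$ as in (\ref{ek25}) with $\hat m_k$ in place of $m_k$), this replacement is harmless.

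Finally, applying Corollary \ref{cor1}: the assumption $\|y\|_{k_0+d_2} < R/m_{k_0}'$ reads $|\varepsilon|\|F_1(0)\|_{k_0+d_2} < R/m_{k_0}'$, which holds by hypothesis. For every $\mu > m_{k_0}'$ one obtains $x_\varepsilon$ with $\|x_\varepsilon\|_{k_0} < R$, $\|x_\varepsilon\|_{k_0} \leq \mu|\varepsilon|\|F_1(0)\|_{k_0+d_2}$, and $G_\varepsilon(x_\varepsilon) = -\varepsilon F_1(0)$, whence $F(\varepsilon, x_\varepsilon) = 0$. The main obstacle throughout is verifying condition 3 after the translation, where a $k$-dependent remainder appears and forces the mild repackaging described above.
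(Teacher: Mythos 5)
Your argument follows the paper's strategy exactly: translate to $G_\varepsilon(x) = F(\varepsilon,x) - \varepsilon F_1(0)$, apply Corollary \ref{cor1} with target $y = -\varepsilon F_1(0)$, and undo the translation. The difference is one of care. The paper's proof simply asserts that $G_\varepsilon$ ``satisfies Conditions 1 to 5 of Theorem \ref{Thm10}'', but you are right that Condition 3 does not transfer verbatim: since $\|F(\varepsilon,x)\|_k \leq \|G_\varepsilon(x)\|_k + \varepsilon_0\|F_1(0)\|_k$, the translation introduces a $k$-dependent residual $\varepsilon_0\|F_1(0)\|_k$ that cannot be absorbed by the $+1$ device of (\ref{q24}), which only handles a constant. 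Your repackaging — taking $\hat m_k := m_k + \|F_1(0)\|_k$, which is still positive and non-decreasing, and $\hat c_1(u) := c_1(u)\max\bigl(1,\varepsilon_0/\|u\|_0\bigr)$, then using $\|u\|_{k+d_1}\geq\|u\|_0$ so that $\hat c_1(u)\,\|F_1(0)\|_k\|u\|_{k+d_1} \geq c_1(u)\,\varepsilon_0\,\|F_1(0)\|_k$ — closes this gap cleanly. And your observation that the conclusion of Corollary \ref{cor1} involves only $R$, $k_0$, $d_2$, $\mu$ and $m_{k_0}'$, so the change in $m_k$ is harmless once $\beta_k$ in (\ref{ek25}) is built from $\hat m_k$, is the right justification that the enlarged constants do not weaken the final estimate. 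In short: same route as the paper, but you supply a verification of Condition 3 that the paper leaves implicit.
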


\begin{proof}
Fix $\varepsilon$ with $\vert\varepsilon\vert<\varepsilon_0$. Consider the function:%
\[
G_{\varepsilon}\left(  x\right)  :=F_{0}\left(  x\right)  +\varepsilon\left(
F_{1}\left(  x\right)  -F_{1}\left(  0\right)  \right)
\]
It satisfies Conditions 1 to 5 of Theorem \ref{Thm10}. The equation $F\left(  \varepsilon
,x_{\varepsilon}\right)  =0$ can be rewritten $G_{\varepsilon}\left(
x\right)  =-\varepsilon F_{1}\left(  0\right)  :=y$. By Theorem \ref{Thm10},
we will be able to solve it provided:%
\[
\left\Vert y\right\Vert _{k_{0}+d_{2}}=\vert \varepsilon\vert\,\left\Vert F_{1}\left(
0\right)  \right\Vert _{k_{0}+d_{2}}<R\frac{1}{m_{k_{0}}^{\prime}}%
\]
and the solution $x_{\varepsilon}$ then satisfies
\[
\left\Vert x_{\varepsilon}\right\Vert _{k_{0}}\leq \mu\left\Vert y\right\Vert
_{k_{0}+d_{2}}=\mu \,\vert \varepsilon \vert\,\left\Vert F_{1}\left(  0\right)  \right\Vert
_{k_{0}+d_{2}}%
\]

\end{proof}

\section{Proof of Theorem \ref{thm2}}

The proof proceeds in three steps. Using Ekeland's variational principle, we
first associate with $\bar{y}\in Y$ a particular point $\bar{x}\in X$. From
then on, we argue by contradiction, assuming that $F\left(  \bar{x}\right)
\neq\bar{y}$. We then identify for every $n$ a particular direction
$u_{n}\in X$, and we investigate the derivative of
$x\rightarrow\sum\beta_{k}\left\Vert
F\left(  x\right)  -\bar{y}\right\Vert _{k}$ in the direction $u_{n}$. We
finally let $n\rightarrow\infty$ and derive a contradiction.

\subsection{Step 1}

We define a new sequence $\alpha_{k}$ by:%
\[
\alpha_{k}=\frac{\beta_{k+d_{2}}}{m_{k}^{\prime}}%
\]
and we endow $X$ with the distance $d$ defined by:%
\begin{equation}
d\left(  x_{1},x_{2}\right)  :=\sum_{k}\alpha_{k}\min\left\{  R,\left\Vert
x_{1}-x_{2}\right\Vert_{k}\right\}  \label{ek32}%
\end{equation}

By Proposition \ref{p2}, $\left(  X,d\right)  $ is a complete metric space.
Now consider the function $f:X\longrightarrow \mathbb{R}\cup\left\{  +\infty\right\}  $
(the value $+\infty$ is allowed) defined by:%
\begin{equation}
f\left(  x\right)  =\sum_{k=0}^{\infty}\beta_{k}\left\Vert F\left(  x\right)
-\bar{y}\right\Vert _{k} \label{100}%
\end{equation}

It is obviously bounded from below, and
\begin{equation}
0\leq\inf f\leq f\left(  0\right)  =\sum_{k=0}^{\infty}\beta_{k}\left\Vert
\bar{y}\right\Vert _{k}<\infty\label{101}%
\end{equation}

It is also lower semi-continuous. Indeed, let $x_{n}\longrightarrow x$ in
$\left(  X,d\right)  $. Then $x_{n}\longrightarrow x$ in every $X_{k}$. By
Fatou's lemma, we have:%
\[
\lim\inf_{n}~f\left(  x_{n}\right)  =\lim\inf_{n}\sum_{k}\beta_{k}\left\Vert
F\left(  x_{n}\right)  -\bar{y}\right\Vert _{k}\geq\sum_{k}\beta_{k}%
\lim\left\Vert F\left(  x_{n}\right)  -\bar{y}\right\Vert _{k}%
\]
and since $F:X\rightarrow Y$ is continuous, we get:%
\[
\sum_{k}\beta_{k}\lim\left\Vert F\left(  x_{n}\right)  -\bar{y}\right\Vert
_{k}=\sum_{k}\beta_{k}\left\Vert F\left(  x\right)  -\bar{y}\right\Vert
_{k}=f\left(  x\right)
\]
so that $\lim\inf_{n}~f\left(  x_{n}\right)  \geq f\left(  x\right)  $, as desired.

By assumption (\ref{y21}), we can take some $R^{\prime}<R$ with:%
\begin{equation}
\sum_{k=0}^{\infty}\beta_{k}\left\Vert \bar{y}\right\Vert _{k}<\frac
{\beta_{k_{0}+d_{2}}}{m_{k_{0}}^{\prime}}R^{\prime} \label{h}%
\end{equation}

We now apply Ekeland's variational principle to $f$ (see \cite{IE}%
,\ \cite{IE2}). We find a point $\bar{x}\in X$ such that:%
\begin{align}
f\left(  \bar{x}\right)   &  \leq f\left(  0\right) \label{y1}\\
d\left(  \bar{x},0\right)   &  \leq R^{\prime}\alpha_{k_{0}}\label{y2}\\
\forall x\in X,\ \ f\left(  x\right)   &  \geq f\left(  \bar{x}\right)
-\frac{f\left(  0\right)  }{R^{\prime}\alpha_{k_{0}}}d\left(  x,\bar
{x}\right)  \label{y3}%
\end{align}

Replace $f\left(  x\right)  $ by its definition (\ref{100}) in inequality
(\ref{y1}). \ We get:%
\begin{equation}
\sum\beta_{k}\left\Vert F\left(  \bar{x}\right)  -\bar{y}\right\Vert _{k}%
\leq\sum_{k=0}^{\infty}\beta_{k}\left\Vert \bar{y}\right\Vert _{k}%
<\infty\label{309}%
\end{equation}
and from the triangle inequality it follows that:%
\begin{equation}
\sum_{k=0}^{\infty}\beta_{k}\left\Vert F\left(  \bar{x}\right)  \right\Vert
_{k}\leq 2 \sum_{k=0}^{\infty}\beta_{k}\left\Vert \bar{y}\right\Vert _{k}<\infty\label{103}%
\end{equation}

If $\left\Vert \bar{x}\right\Vert _{k_{0}}>R^{\prime}$, then $d\left(  \bar
{x},0\right)  >R^{\prime}\alpha_{k_{0}}$, contradicting formula (\ref{y2}). So we must have
$\left\Vert \bar{x}\right\Vert _{k_{0}}\leq R^{\prime}<R$, and (\ref{t220}) is proved.

We now work on (\ref{y3}). To simplify notations, we set:%
\begin{equation}
A=\frac{\sum_{k=0}^{\infty}\beta_{k}\left\Vert \bar{y}\right\Vert _{k}%
}{R^{\prime}\alpha_{k_{0}}}=\frac{f\left(  0\right)  }{R^{\prime}\alpha
_{k_{0}}} \label{311}%
\end{equation}

It follows from (\ref{y3}) that, for every $u\in X$ and $t>0$, we have:%
\[
-\left(  f\left(  \bar{x}+tu\right)  -f\left(  \bar{x}\right)  \right)  \leq
Ad\left(  \bar{x}+tu,\bar{x}\right)
\]
and hence, dividing by $t$:%
\begin{equation}
-\frac{1}{t}\left[  \sum_{k=0}^{\infty}\beta_{k}\left\Vert \bar{y}-F\left(
\bar{x}+tu\right)  \right\Vert _{k}-\sum_{k=0}^{\infty}\beta_{k}\left\Vert
\bar{y}-F\left(  \bar{x}\right)  \right\Vert _{k}\right]  \leq\frac{A}{t}%
\sum_{k\geq0}\alpha_{k}\min\left\{  R,t\left\Vert u\right\Vert _{k}\right\}
\label{d5}%
\end{equation}

\subsection{Step 2.}

If $F\left(  \bar{x}\right)  =\bar{y}$, the proof is over. If not, we set:%
\begin{align*}
v  &  =F\left(  \bar{x}\right)  -\bar{y}\\
u  &  =-L\left(  \bar{x}\right)  v
\end{align*}
so that:
\[
DF\left(  \bar{x}\right)  u=-\left(  F\left(  \bar{x}\right)  -\bar{y}\right)
\]

Since $Y$ is standard, there is a sequence $v_{n}$ such that:%
\begin{align}
\forall k,\ \ \ \left\Vert v_{n}-v\right\Vert _{k}  &  \rightarrow
0\label{205}\\
\forall n,\ \ \left\Vert v_{n}\right\Vert _{k}  &  \leq c_{3}\left(  v\right)
\left\Vert v\right\Vert _{k}\label{206}\\
\left\Vert v_{n}\right\Vert _{k}  &  \leq c_{0}\left(  v_{n}\right)  ^{k}
\label{207}%
\end{align}

Set $u_{n}=-L\left(  \bar{x}\right)  v_{n}$. Clearly $\left\Vert
u_{n}-u\right\Vert _{k}\rightarrow0$ for every $k$. We have, using Condition
5:%
\begin{align}
\left\Vert u_{n}\right\Vert _{k}  &  \leq m_{k}^{\prime}\left\Vert
v_{n}\right\Vert _{k+d_{2}}\nonumber\\
&  \leq m_{k}^{\prime}c_{0}\left(  v_{n}\right)  ^{k+d_{2}} \label{300}%
\end{align}

We now substitute $u_{n}$ into formula (\ref{d5}), always under the assumption
that $F\left(  \bar{x}\right)  -\bar{y}\neq0$, and we let $t\longrightarrow0$.
If convergence holds, we get:%
\begin{equation}
-\lim_{\substack{t\longrightarrow0 \\t>0}}\frac{1}{t}\left[  \sum
_{k=0}^{\infty}\beta_{k}\left\Vert \bar{y}-F\left(  \bar{x}+tu_{n}\right)
\right\Vert _{k}-\sum_{k=0}^{\infty}\beta_{k}\left\Vert \bar{y}-F\left(
\bar{x}\right)  \right\Vert _{k}\right]  \leq A\lim_{
_{\substack{t\longrightarrow0 \\t>0}}}\sum_{k\geq0}\frac{\alpha_{k}}{t}%
\min\left\{  R,t\left\Vert u_{n}\right\Vert _{k}\right\}  \label{d50}%
\end{equation}

We shall treat the right- and the left-hand side separately, leaving $n$ fixed throughout.

We begin with the right-hand side. We have:%
\[
\frac{\alpha_{k}}{t}\min\left\{  R,t\left\Vert u_{n}\right\Vert _{k}\right\}
=\alpha_{k}\min\left\{  \frac{R}{t},\left\Vert u_{n}\right\Vert _{k}\right\}
=:\gamma_{k}\left(  t\right)
\]

We have $\gamma_{k}\left(  0\right)  \geq0$, $\gamma_{k}\left(  t^{\prime
}\right)  \geq\gamma_{k}\left(  t\right)  $ for $t^{\prime}\leq t$, and
$\gamma_{k}\left(  t\right)  \rightarrow\alpha_{k}\left\Vert u_{n}\right\Vert
_{k}$ when $t\rightarrow0$. By the Monotone Convergence Theorem:
\begin{equation}
\lim_{_{\substack{t\longrightarrow0 \\t>0}}}\sum_{k=0}^{\infty}\frac{1}%
{t}\alpha_{k}\min\left\{  R,t\left\Vert u_{n}\right\Vert _{k}\right\}
=\sum_{k=0}^{\infty}\alpha_{k}\left\Vert u_{n}\right\Vert _{k} \label{z16}%
\end{equation}

Now for the left-hand side of (\ref{d50}). Rewrite it as:%
\begin{equation}
-\sum_{k=0}^{\infty}\beta_{k}\frac{g_{k}\left(  t\right)  -g_{k}\left(
0\right)  }{t} \label{f3}%
\end{equation}
where:%
\[
g_{k}\left(  t\right)  :=\left\Vert \bar{y}-F\left(  \bar{x}+tu_{n}\right)
\right\Vert _{k}%
\]

We have $\left\Vert \bar{x}+tu_{n}\right\Vert _{k}\leq\left\Vert \bar
{x}\right\Vert _{k}+t\left\Vert u_{n}\right\Vert _{k}$. We have seen that
$\left\Vert \bar{x}\right\Vert _{k_{0}}\leq R^{\prime}<R$, so there is some
$\bar{t}>0$ so small that, for $0<t<\bar{t}$, we have $\left\Vert \bar
{x}+tu_{n}\right\Vert _{k_{0}}<R$. Without loss of generality, we can assume
$\bar{t}\leq1$. Since $F$ is G\^{a}teaux-differentiable, by Lemma \ref{l2}
$g_{k}$ has a right derivative everywhere, and:%
\begin{equation}
\vert \left(  g_{k}\right )_{+}^{\prime}(t)\vert=\lim_{\substack{h\rightarrow0
\\h>0}}\left\vert \frac{g_{k}\left(  t+h\right)  -g_{k}\left(  t\right)  }%
{h}\right\vert \leq\left\Vert DF\left(  \bar{x}+tu_{n}\right)  u_{n}%
\right\Vert _{k} \label{r1}%
\end{equation}

Introduce the function $f_{k}\left(  t\right)  =\left\Vert F\left(  \bar
{x}+tu_{n}\right)  \right\Vert _{k}$. It has a right derivative everywhere,
and $\left(  f_{k}\right)  _{+}^{\prime}\left(  t\right)  \leq\left\Vert
DF\left(  \bar{x}+tu_{n}\right)  u_{n}\right\Vert _{k}$, still by Lemma
\ref{l2}. We shall henceforth write the right derivatives $g_{k}^{\prime}$ and
$f_{k}^{\prime}$ instead of $\left(  g_{k}\right)  _{+}^{\prime}$ and $\left(
f_{k}\right)  _{+}^{\prime}$. By Condition 3, we have:
\begin{align*}
f_{k}^{\prime}\left(  t\right)   &  \leq c_{1}\left(  u_{n}\right)  \left(
m_{k}\left\Vert u_{n}\right\Vert _{k+d_{1}}+f_{k}\left(  t\right)  \right) \\
f_{k}^{\prime}\left(  t\right)  -c_{1}\left(  u_{n}\right)  f_{k}\left(
t\right)   &  \leq c_{1}\left(  u_{n}\right)  m_{k}\left\Vert u_{n}\right\Vert
_{k+d_{1}}%
\end{align*}

Integrating, we get:%
\begin{align*}
e^{-tc_{1}\left(  u_{n}\right)  }f_{k}\left(  t\right)  -f_{k}\left(
0\right)   &  \leq  \left(  1-e^{-tc_{1}\left(  u_{n}\right)  }\right)  m_{k}\left\Vert u_{n}%
\right\Vert _{k+d_{1}}\\
m_{k}\left\Vert u_{n}
\right\Vert _{k+d_{1}}+ f_{k}\left(  t\right)   &  \leq e^{tc_{1}\left(  u_{n}\right)  } \left[  m_{k}\left\Vert u_{n}\right\Vert
_{k+d_{1}}+ \left\Vert F\left(  \bar{x}\right)
\right\Vert _{k}\right] %
\end{align*}

Substituting this into Condition 3 and using (\ref{300}), we get:%
\begin{align*}
\left\Vert DF\left(  \bar{x}+tu_{n}\right)  u_{n}\right\Vert _{k}  &  \leq
c_{1}\left(  u_{n}\right)  \left(  m_{k}\left\Vert u_{n}\right\Vert _{k+d_{1}%
}+f_{k}\left(  t\right)  \right) \\
&  \leq c_{1}\left(  u_{n}\right) e^{tc_{1}\left(  u_{n}\right)  } \left[  m_{k}\left\Vert u_{n}\right\Vert
_{k+d_{1}}+ \left\Vert F\left(  \bar{x}\right)
\right\Vert _{k}\right] \\
&  \leq C_{1}\left(  u_{n}\right)  m_{k}m_{k+d_{1}}^{\prime}c_{0}\left(
v_{n}\right)  ^{k+d_{1}+d_{2}}+C_{1}\left(  u_{n}\right)  \left\Vert F\left(
\bar{x}\right)  \right\Vert _{k}=:\ell_{k}%
\end{align*}
where the term $C_{1}\left(  u_{n}\right):=c_{1}\left(  u_{n}\right) e^{c_{1}\left(  u_{n}\right)  }  $
depends on $u_{n}$ , but not on $k$. We have used the fact that $0<t<1$.

It follows from (\ref{r1}) that the function $g_{k}$ is $\ell_{k}%
$-Lipschitzian. So we get,
for every $k$ and $0<t<\bar{t}\,:$%
\[
\beta_{k}\left\vert \frac{g_{k}\left(  t\right)  -g_{k}\left(  0\right)  }%
{t}\right\vert \leq C_{1}\left(  u_{n}\right)  \beta_{k}m_{k}m_{k+d_{1}%
}^{\prime}c_{0}\left(  v_{n}\right)  ^{k+d_{1}+d_{2}}+C_{1}\left(
u_{n}\right)  \beta_{k}\left\Vert F\left(  \bar{x}\right)  \right\Vert _{k}%
\]

By assumption (\ref{y20}), the first term on the right-hand side
belongs to a convergent series. By inequality (\ref{103}), the second term is also summable.
So we can apply Lebesgue's Dominated Convergence Theorem to the series (\ref{f3}), yielding:%

\begin{equation}
\sum_{k=0}^{\infty}-\beta_{k}\,%
g'_{k}\left(  0\right) = \lim_{\substack{t\rightarrow0 \\t>0}}\sum_{k=0}^{\infty}-\beta_{k}\frac
{g_{k}\left(  t\right)  -g_{k}\left(  0\right)  }{t} \label{z17}%
\end{equation}

Writing (\ref{z16}) and (\ref{z17}) into formula (\ref{d5}) yields:%

\begin{equation}
\sum_{k=0}^{\infty} -\beta_{k}\,%
g'_{k}\left( 0\right) \leq A\sum_{k\geq
0}\alpha_{k}\left\Vert u_{n}\right\Vert _{k} \label{z18}%
\end{equation}

We now apply Lemma \ref{l2}.
Denote by $N_{k}$ the subdifferential of the norm in $Y_k$:%
\[
\forall  y\neq 0\,,\quad y^{\ast}\in N_{k}\left(  y\right)  \Longleftrightarrow\left\Vert y^{\ast
}\right\Vert _{k}^{\ast}=1\text{ and }\left\langle y^{\ast},y\right\rangle
_{k}=\left\Vert y\right\Vert _{k}\;.%
\]

There is some $y_{k}^{\ast}\left(  n\right)  \in N_{k}\left(  F\left(  \bar
{x}\right)  -\bar{y}\right)  $ such that:

\begin{equation}
g'_k(0)\,=\ <y_{k}^{\ast}\left(  n\right)  ,DF\left(
\bar{x}\right)  u_{n}>_{k} \label{q30}%
\end{equation}

Substituting into (\ref{z18}) we get:%
\begin{equation}
\sum_{k=0}^{\infty}-\beta_{k}<y_{k}^{\ast}\left(  n\right)  ,DF\left(
\bar{x}\right)  u_{n}>_{k}\ \leq A\sum_{k\geq0}\alpha_{k}\left\Vert u_{n}\right\Vert
_{k} \label{310}%
\end{equation}

Since $F\left(  \bar
{x}\right)  -\bar{y}\neq 0\,,$ the caracterization of $N_{k}\left(  F\left(  \bar
{x}\right)  -\bar{y}\right)  $ gives:
\begin{align}
<y_{k}^{\ast}\left(  n\right)  ,F\left(  \bar{x}\right)  -\bar{y}>_{k}  &
=\left\Vert F\left(  \bar{x}\right)  -\bar{y}\right\Vert _{k} \label{320}\\
\left\Vert y_{k}^{\ast}\left(  n\right)  \right\Vert _{k}^{\ast}  &
=1\label{321}%
\end{align}

\subsection{Step 3}

We now remember that $u_{n}=-L\left(  \bar{x}\right)  v_{n}$, so that
$DF\left(  \bar{x}\right)  u_{n}=-v_{n}$. Formula (\ref{310}) becomes:%
\begin{equation}
\sum_{k=0}^{\infty}\beta_{k}<y_{k}^{\ast}\left(  n\right)  ,v_{n}>_{k}\ \leq
A\sum_{k\geq0}\alpha_{k}\left\Vert L\left(  \bar{x}\right)  v_{n}\right\Vert
_{k} \label{330}%
\end{equation}

Set $\varphi_{k}\left(  n\right)  =\beta_{k}<y_{k}^{\ast}\left(  n\right)
,v_{n}>_{k}$ and $\psi_{k}\left(  n\right)  =\alpha_{k}\left\Vert L\left(
\bar{x}\right)  v_{n}\right\Vert _{k}$ .

On the one hand, by formula (\ref{206}), since $\left\Vert y_{k}^{\ast}\left(
n\right)  \right\Vert _{k}^{\ast}=1$, we have: $\ $%
\begin{align}
\left\vert \varphi_{k}\left(  n\right)  \right\vert  &  =\left\vert
\ \beta_{k}<y_{k}^{\ast}\left(  n\right)  ,v_{n}>_{k}\ \right\vert \leq
\beta_{k}\left\Vert y_{k}^{\ast}\left(  n\right)  \right\Vert _{k}^{\ast
}\left\Vert v_{n}\right\Vert _{k}\nonumber\\
&  \leq c_{3}\left(  v\right)  \beta_{k}\left\Vert v\right\Vert _{k}%
=c_{3}\left(  v\right)  \beta_{k}\left\Vert F\left(  \bar{x}\right)  -\bar
{y}\right\Vert _{k} \label{q31}%
\end{align}
and on the other, still by formula (\ref{206}), we have:%
\begin{align}
\left\vert \psi_{k}\left(  n\right)  \right\vert  &  =\alpha_{k}\left\Vert
L\left(  \bar{x}\right)  v_{n}\right\Vert _{k}=\frac{\beta_{k+d_{2}}}%
{m_{k}^{\prime}}\left\Vert L\left(  \bar{x}\right)  v_{n}\right\Vert
_{k}\nonumber\\
&  \leq\frac{\beta_{k+d_{2}}}{m_{k}^{\prime}}m_{k}^{\prime}\left\Vert
v_{n}\right\Vert _{k+d_{2}}=\beta_{k+d_{2}}\left\Vert v_{n}\right\Vert
_{k+d_{2}}\nonumber\\
&  \leq c_{3}\left(  v\right)  \beta_{k+d_{2}}\left\Vert v\right\Vert
_{k+d_{2}}=c_{3}\left(  v\right)  \beta_{k+d_{2}}\left\Vert F\left(  \bar
{x}\right)  -\bar{y}\right\Vert _{k+d_{2}} \label{q32}%
\end{align}

By inequality (\ref{309}), the series $\sum\beta_{k}\left\Vert F\left(  \bar
{x}\right)  -\bar{y}\right\Vert _{k}$ is convergent, so the last terms in
(\ref{q31}) and (\ref{q32}), which are independent of $n$,
form convergent series.

From (\ref{205}) and Condition 5, for each $k$, $\Vert L(\bar{x})\left(v_n-v\right)\Vert_k\to 0$ as $n\to \infty$.
We thus get the pointwise convergence of $\psi_k(n)=\alpha_{k}\left\Vert L\left(
\bar{x}\right)  v_{n}\right\Vert _{k}\,$:

\[
\lim_{n\to\infty}\psi_{k}\left(  n\right)  =\alpha_{k}\left\Vert L(\bar{x})v\right\Vert _{k}\,.\;
\]

Remembering that $v=F(\bar{x})-\bar{y}$, we have, by Formulas (\ref{320}) and (\ref{321}):
\[
\bigl\vert <y^*_k(n),v_n>_k-\Vert v \Vert_k \bigr\vert =
\left\vert <y^*_k(n),v_n-v>_k\right\vert\leq  \Vert v_n-v\Vert_k
\]
hence, from (\ref{205}), the pointwise convergence of $\varphi_k(n)=\beta_{k}<y^*_k(n),v_n>_k\,$:

\[\lim_{n\to\infty}\varphi_{k}\left(  n\right)  =\beta_k\Vert v \Vert_k\;.
\]

So, applying Lebesgue's Dominated Convergence Theorem to the series $\sum_k\varphi_{k}\left(  n\right)$ and $\sum_k \psi_{k}\left(  n\right)\,,$ we get:%
\begin{align*}
\lim_{n\rightarrow\infty}\ \sum_{k=0}^{\infty}\beta_{k}<y_{k}^{\ast}\left(
n\right)  ,v_{n}>_{k}  &  \ =\sum_{k=0}^{\infty}\beta_k\Vert v \Vert_k\\
\lim_{n\rightarrow\infty}\ \sum_{k=0}^{\infty}\alpha_{k}\left\Vert L\left(
\bar{x}\right)  v_{n}\right\Vert _{k}  &  \ =\sum_{k=0}^{\infty}\alpha_{k}\left\Vert L(\bar{x})v\right\Vert _{k}%
\end{align*}

It follows from the above and from (\ref{330}) that:%

\[
\sum_{k=0}^{\infty}\beta_{k}\left\Vert v\right\Vert _{k}\leq A\sum_{k\geq
0}\alpha_{k}\left\Vert L\left(  \bar{x}\right)  v\right\Vert _{k}=A\sum
_{k\geq0}\frac{\beta_{k+d_{2}}}{m_{k}^{\prime}}\left\Vert L\left(  \bar
{x}\right)  v\right\Vert _{k}\]

Estimating the right-hand side by Condition 5, we finally get:%
\[
\sum_{k=0}^{\infty}\beta_{k}\left\Vert v\right\Vert _{k}\leq A\sum_{k\geq
0}\beta_{k+d_{2}}\left\Vert v\right\Vert _{k+d_{2}}%
\]
with $v=F\left(  \bar{x}\right)  -\bar{y}\neq 0$, hence $A\geq1$.
Remembering the definition (\ref{311}) of $A$, this yields:
\[
\frac{\sum_{k=0}^{\infty}\beta_{k}\left\Vert \bar{y}\right\Vert _{k}%
}{R^{\prime}\alpha_{k_{0}}}=\frac{\sum_{k=0}^{\infty}\beta_{k}\left\Vert
\bar{y}\right\Vert _{k}}{R^{\prime}\beta_{k_{0}+d_{2}}}m_{k_{0}}^{\prime}%
\geq1
\]
which contradicts (\ref{h}). This shows that $F\left(  \bar{x}\right)
-\bar{y} $ cannot be non-zero, and concludes the proof.

\end{document}